\begin{document}
\setcounter{page}{1}
\setlength{\unitlength}{12mm}
\newcommand{\f}{\frac}
\newtheorem{theorem}{Theorem}[section]
\newtheorem{lemma}[theorem]{Lemma}
\newtheorem{proposition}[theorem]{Proposition}
\newtheorem{corollary}[theorem]{Corollary}
\theoremstyle{definition}
\newtheorem{definition}[theorem]{Definition}
\newtheorem{example}[theorem]{Example}
\newtheorem{solution}[theorem]{Solution}
\newtheorem{xca}[theorem]{Exercise}
\theoremstyle{remark}
\newtheorem{remark}[theorem]{Remark}
\numberwithin{equation}{section}
\newcommand{\sta}{\stackrel}
\title{\bf Weak fuzzy topology on fuzzy topological vector spaces}
\author { B. Daraby$^{a}$, N. Khosravi$^{b}$, A. Rahimi$^{c}$}
\date{\footnotesize $^{a, b, c}$Department of Mathematics, University of Maragheh, P. O. Box 55136-553, Maragheh, Iran\\}
\maketitle

\begin{abstract}
\noindent
In this paper, we introduce the concept of weak fuzzy linear topology on a fuzzy topological vector space as a generalization of usual weak topology. We prove that this topology consists of all weakly lower semi-continuous fuzzy sets on a given vector space when $ \mathbb{K} $ ($ \mathbb{R} $ or $ \mathbb{C} $) considered with its usual fuzzy topology. In the case that the topology of $ \mathbb{K} $ is different from the usual fuzzy topology, we show that the weak fuzzy topology is not equivalent with the topology of weakly lower semi-continuous fuzzy sets.


\vspace{.5cm}\leftline{{Subject Classification 2010: 03E72, 54A40
}}

\vspace{.5cm}\leftline{Keywords: Fuzzy topology; Weak fuzzy topology; Duality.}
\end{abstract}
\section{Introduction}
Katsaras \cite{k10} introduced the concepts of fuzzy vector space and fuzzy topological vector space. He \cite{k7} considered the usual fuzzy topology on the corresponding scalar field $ \mathbb{K} $ , namely the one consisting of all lower semi-continuous functions from $\mathbb{K}$ into $ I $, where $ I=[0, 1] $. Also, in the studying fuzzy topological vector space he \cite{k8} investigated the idea of fuzzy norm on linear spaces. Since then, many authors like Felbin \cite{g}, Cheng and Mordeson \cite{f}, Bag and Samanta \cite{poi}, Sadeqi and Yaqub Azari \cite{s}, and so on started to introduce the notion of fuzzy normed linear space. Later, Xiao and Zhu \cite{u}, Fang \cite{dD}, Daraby et. al. (\cite{d}, \cite{d2}) redefined, the idea of Felbin's \cite{g} definition of fuzzy norm and studied various properties of its topological structure. Das \cite{kh} constructed a fuzzy topology generated by fuzzy norm and studied some properties of this topology. Fang \cite{dD} investigated a new I-topology $ \mathcal{T}^*_{\Vert.\Vert} $ on the fuzzy normed linear space.

 In this paper, we introduce the concept of weak fuzzy linear topology on a fuzzy topological vector space as a generalization of usual weak topology. Moreover, we prove that the weak fuzzy topology is not equivalent with the topology of weakly lower semi-continuous fuzzy sets. In fact, the weak fuzzy topology on a fuzzy topological vector space is an extension of weak topology. The difference between weak topology and weak fuzzy topology leads to new results on the theory of topological vector spaces.


\section{Preliminaries}
Let $ X $ be a non-empty set. A fuzzy set in $ X $ is an element of the set $ I^X $ of all functions from $ X $ into $ I $.
\begin{definition}\cite{kh} Let $X$ and $Y$ be any two sets, $ f: X \rightarrow Y $ be a mapping and $ \mu $ be a fuzzy subset of $X$. Then $f(\mu)$ is a fuzzy subset of $Y$ defined by
$$f(\mu)(y)=\begin{cases}\sup_{x\in f^{-1}(y)} \mu(x) &\quad f^{-1}(y)\neq\emptyset,\\0 & \:\:\:\: else,\end{cases}$$
for all $y \in Y$, where $f^{-1}(y)=\lbrace x: f(x)=y \rbrace$. If $\eta$ is a fuzzy subset of $Y$, then the fuzzy subset $f^{-1}(\eta)$ of $X$ is defined by $f^{-1}(\eta)(x)=\eta(f(x))$ for all $x\in X$.
\end{definition}
\begin{definition}\cite{k7}
A fuzzy topology on a set $ X $ is a subset $ \varphi $ of $ I^{X} $ satisfying the following conditions:
\begin{enumerate}
\item[$(i)$] $ \varphi $ contains every constant fuzzy set in $ X $,
\item[$(ii)$] if $ \mu_{1},\mu_{2} \in \varphi $, then $ \mu_{1} \wedge \mu_{2} \in \varphi $,
\item[$(iii)$] if $ \mu_{i} \in \varphi $ for each $ i \in A $, then $\sup_{i\in A}\mu_{i} \in \varphi$.
\end{enumerate}
The pair $ (X,\varphi) $ is called a fuzzy topological space.
\end{definition}
The elements of $ \varphi$ are called open fuzzy sets in $ X $.
 \begin{definition}\cite{kh}
 A fuzzy topological space $ (X,\varphi) $ is said to be fuzzy Hausdorff if for $ x,y \in X $ and $ x\neq y $ there exist $ \eta, \beta \in \varphi $ with $ \eta(x)=\beta(y)=1  $ and $ \eta \wedge \beta=0 $.
 \end{definition}
  A mapping $f$ from a fuzzy topological space $X$ to a fuzzy topological space $Y$ is called fuzzy continuous at some point $ x \in X $ if $f^{-1}(\mu)$ is open in $ X $ for each open fuzzy set $\mu$ in $Y$.

 Suppose $X$ is a fuzzy topological space and $x\in X $. A fuzzy set $\mu$ in $ X $ is called a neighborhood of $x\in X$ if there is an open fuzzy set $\eta$ with $ \eta\leq \mu $ and $ \eta(x)=\mu(x)>0 $. Warren \cite{k11} has proved that a fuzzy set $\mu$ in $X$ is open if and only if $\mu$ is a neighborhood of $x$ for each  $ x\in X $ with $\mu(x)>0$. Also, he proved that $ f $ is fuzzy continuous if and only if $ f $ is fuzzy continuous at every $ x\in X $.

 \begin{definition}\cite{kh}
 If $ \mu_{1}$ and $\mu_{2} $ are two fuzzy subsets of a vector space $ E $, then the fuzzy set $ \mu_1+\mu_2 $ is defined by
 $$ (\mu_1+\mu_2)(x)=\sup_{x=x_1+x_2} (\mu_1(x_1)\wedge \mu_2(x_2)). $$
  If $ t \in \mathbb{K}, $ we define the fuzzy sets $ \mu_1 \times \mu_2  $  and $ t\mu $ as follows:
$$ (\mu_1 \times \mu_2)(x_1,x_2)= \min \lbrace \mu_1(x_1), \mu_2(x_2)\rbrace $$
and
\begin{enumerate}
\item[$(i)$] \;\; for $ t\neq 0,\: (t\mu)(x)=\mu(\frac{x}{t}) $ \: for all \: $ x \in E $,
\item[$(ii)$]\:\: for $ t=0, \: (t\mu)(x)= \begin{cases}0 &   x\neq 0, \quad \\ \sup_{y \in E} \mu(y) &x=0. \end{cases} $
\end{enumerate}
 \end{definition}
  A net $(x_\alpha)_{\alpha\in T}$ in a fuzzy topological vector space $E$ is fuzzy convergent to $x$ if and only if for each fuzzy neighborhood $\mu$ of $x$ and each $0<\varepsilon<\mu(x)$ there is $\alpha_o\in T$ such that $\mu(x_\alpha)>\varepsilon$ for all $\alpha>\alpha_0$. A net $(x_\alpha)_{\alpha\in T}$ in a fuzzy topological vector space $E$ is fuzzy Cauchy if and only if for each fuzzy neighborhood $\mu$ of zero and each $0<r<\mu(0)$ there is $\alpha_0\in T$ such that for every $\alpha,\alpha'\in T$ with $\alpha,\alpha'>\alpha_0$, $|\mu(x_\alpha-x_{\alpha'})|>r$. A fuzzy topological vector space $E$ is called complete if and only if each Cauchy net in $E$ is convergent \cite{kh}.

 A fuzzy set $ \mu $ in vector space $ E $ is called balanced if $ t\mu \leq \mu $ for each scalar $ t $ with $ \vert t \vert \leq1$. As is shown in \cite{k10}, $ \mu $ is balanced if and only if $ \mu(tx)\geq \mu(x) $ for each $ x\in E $ and each scalar $ t $ with $ \vert t \vert\leq1 $. Also, when $ \mu $ is balanced, we have $ \mu(0) \geq \mu(x) $ for each $ x\in E $. The fuzzy set $ \mu $ is called absorbing if and only if $ \sup_{t>0}t\mu=1 $. Then a fuzzy set $\mu$ is absorbing whenever $\mu(0)=1$. We shall say that the fuzzy set $\mu$ is convex if and only if for all $t\in I $, $t\mu+(1-t)\mu\leq\mu$ \cite{k7}.

\begin{definition}\cite{dD1}
 A fuzzy topology $\tau$ on a vector space $E$ is said to be a fuzzy vector topology, if the two mappings\\
$ f:E \times E \rightarrow E, (x,y)\rightarrow x+y,$\\
$ g:\mathbb{K} \times E \rightarrow E, (t,x)\rightarrow tx,$\\
are continuous when $ \mathbb{K} $ is equipped with the fuzzy topology induced by the usual topology, $ E \times E $ and $ \mathbb{K} \times E $ are the corresponding product fuzzy topologies. A vector space $ E $ with a fuzzy vector topology $ \tau $, denoted by the pair $ (E,\tau) $ is called fuzzy topological vector space (abbreviated to FTVS).
\end{definition}
\begin{definition}\cite{k7}
Let $(E,\tau)$ be a fuzzy topological vector space the collection $ \nu\subset\tau $ of neighborhoods of zero is a local base whenever for each neighborhood $ \mu $ of zero and each $\theta\in(0,\mu(0))$ there is $ \gamma\in \nu $ such that $ \gamma\leq\mu $ and $ \gamma(0)>\theta $.
\end{definition}
\begin{definition}\cite{k8}
A fuzzy seminorm on $E$ is a fuzzy set $\mu$ in $ E $ which is absolutely convex and absorbing.
\end{definition}
We say that a fuzzy set $\mu$, in a vector space $E$, absorbs a fuzzy set $\eta$ if $\mu(0) > 0$ and for every $\theta <\mu(0)$ there exists $t > 0$ such that $\theta\wedge (t\eta) <\mu$. A fuzzy set $\mu$ in a fuzzy linear space $E$ is called bounded if it is absorbed by every neighborhood of zero. A locally convex fuzzy linear space $E$ is called bornological if every absolutely convex fuzzy set in $E$ which absorbs bounded sets is a neighborhood
of zero or equivalently every fuzzy bounded linear operator from $E$ into any fuzzy topological vector space is fuzzy continuous \cite{k8}.

In this paper, the concepts of fuzzy real numbers and Felbin-fuzzy normed are considered in the sense of Xiao and Zhu which are defined below:
\begin{definition}\cite{u} A mapping $\eta:\mathbb{R} \rightarrow I $ is called a fuzzy
real number (fuzzy intervals), whose $\alpha$-level set is denoted by $[\eta]_{\alpha}$, i.e.,
$[\eta]_{\alpha}=\{t:\eta(t)\geq\alpha\}$, if it satisfies two axioms:
\begin{enumerate}
\item[($N_{1}$)] There exists $r'\in \mathbb{R}$ such that $\eta (r')=1$.
\item[($N_{2}$)] For all $ 0<\alpha\leq 1, $ there exist real numbers
$-\infty<\eta^{-}_{\alpha}\leq \eta^{+}_{\alpha} <+\infty$
such that  $[\eta]_{\alpha}$ is equal to the closed interval
$[\eta^{-}_{\alpha}, \eta^{+}_{\alpha}]$.
\end{enumerate}
\end{definition}
The set of all fuzzy real numbers (fuzzy intervals) is denoted by $F(\mathbb{R})$.
If $\eta\in F(\mathbb{R})$ and $\eta(t)=0$ whenever $t<0$, then $\eta$ is called
a non-negative fuzzy real number and $F^{\ast}(\mathbb{R})$
denotes the set of all non-negative fuzzy real numbers \cite{d}.
\begin{definition}\cite{u}
Let $E$ be a vector space over $ \mathbb{R} $; $L$ and $R$ (respectively, left norm and right norm)
be symmetric and non-decreasing mappings from $I \times I $
into $I$ satisfying $L(0, 0)=0$ and $ R(1, 1)=1.$
Then $\|\cdot\|$ is called a fuzzy norm and $(E, \|\cdot\|, L, R)$
is a fuzzy normed linear space (abbreviated to FNLS)
if the mapping $\|\cdot\|$ from $E$ into $F^{\ast}(\mathbb{R})$
satisfies the following axioms, where $[\|x\|]_{\alpha}=[\|x\|^{-}_{\alpha}, \|x\|^{+}_{\alpha}]$
for $x\in E$ and $\alpha\in (0, 1]:$
\begin{enumerate}
\item[($F1$)]   $x=0$ if and only if $\|x\|=\widetilde{0},$
\item[($F2$)]$\|rx\|=|r|\odot\|x\|$ for all $x\in E$ and $r\in (-\infty, \infty),$
\item[($F3$)] $\forall x, y\in E:$
\item[($F3R$)] if $s\geq\|x\|^{-}_{1}, t\geq\|y\|^{-}_{1}$ and $s+t\geq\|x+y\|^{-}_{1},$ then\\
$\|x+y\|(s+t)\leq R(\|x\|(s), \|y\|(t)),$
\item[($F3L$)] if $s\leq\|x\|^{-}_{1}, t\leq\|y\|^{-}_{1}$ and $s+t\leq\|x+y\|^{-}_{1},$ then\\
$\|x+y\|(s+t)\geq L(\|x\|(s), \|y\|(t)).$
\end{enumerate}
\end{definition}
\begin{definition}\cite{kh}
For fixed $ \alpha \in (0, 1] $ and $ \varepsilon >0 $, the fuzzy set $ \mu_{\alpha}(x,\varepsilon) $ defined in $(E, \|\cdot\|)$ by
 $$ \mu_{\alpha}(x,\varepsilon)(y)=\begin{cases}\alpha \:\: &  \|y-x\|^{+}_{\alpha} < \varepsilon, \quad\\0 &else,\end{cases} $$
is said to be an $ \alpha-open $ sphere in $(E, \|\cdot\|)$.
\end{definition}
\begin{definition}\cite{dD}
Any fuzzy set $ \mu \in I^{E} $ is defined to be $\|.\|-$ linearly open if for every $ x\in supp(\mu)$ and $ \alpha\in (0, \mu(x)) $ there exists $\varepsilon >0 $ such that $ \mu_{\alpha}(x,\varepsilon) \leq \mu. $
\end{definition}
We set $\mathcal{T}^{\ast}_{\|.\|}=\lbrace \mu\in I^{E}\: |\:\: \mu \:\:is\:\:\: \Vert.\Vert-linearly\:\:open \rbrace $. It was proved in \cite{dD} that if $(E, \|\cdot\|, L, R)$ is a fuzzy normed space, then $ (E,\|\cdot\|) $ is a fuzzy topological vector space.\\
The Katsaras norm was defined as follows:
\begin{definition}\cite{k8}
A fuzzy norm on vector space $E$ is an absolutely convex and absorbing fuzzy set $\rho$ with $ \inf_{t>0} (t\rho)(x)=0$, for $x\neq 0$.
\end{definition}
If $ \rho $ is a Katsaras norm on a vector space $ E, $ then the collection
$$ B_{\rho}= \lbrace \theta\wedge (t\rho)\: |\:\: t>0, 0<\theta \leq 1 \rbrace, $$ is a  base of neighborhoods of zero for a fuzzy linear topology on $E$. The fuzzy set $\mu$ is a fuzzy neighborhood of zero in this fuzzy topology if and only if
$$\exists \: 0<\theta\leq1,\:t>0\:;\: \theta\wedge (t\rho)\leq \mu.$$ Also $ \mu $ is a fuzzy neighborhood of $ x\in E $ if and only if $$ (x+\theta\wedge (t\rho))\leq \mu $$\\
i.e.
for $y\in E$, $$(x+\theta\wedge (t\rho))(y)=(\theta\wedge (t\rho))(y-x)\leq \mu(y). $$
In the following theorem, we prove that the topologies generated by Felbin-fuzzy norm and Katsaras norm are equivalent.
\begin{theorem}\label{th45}
If $ (E,\|\cdot\|) $ is a Felbin-fuzzy normed space, then there is a Katsaras norm $ \rho $ on $ E $ such that the fuzzy topologies generated by $ \|\cdot\| $ and $ \rho $ are equivalent.
\end{theorem}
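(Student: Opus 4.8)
The plan is to read a Katsaras norm off the level data of $\|\cdot\|$ and then match the two neighbourhood filters of the origin. I would define
$$\rho(x)=\sup_{0\le s\le 1}\|x\|(s),$$
i.e. $\rho(x)$ is the largest membership value the fuzzy number $\|x\|$ attains on $[0,1]$. Since $\|0\|=\widetilde{0}$ peaks at $0$ we have $\rho(0)=1$, so $\rho$ is absorbing. The whole argument runs on the scaling identity furnished by $(F2)$: as $\|x/t\|(s)=\|x\|(ts)$ for $t>0$, one gets
$$(t\rho)(x)=\sup_{0\le u\le t}\|x\|(u),$$
equivalently $(t\rho)(x)\ge\alpha$ if and only if $\|x\|^{-}_{\alpha}\le t$. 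Thus the super-level sets of the scaled fuzzy set $t\rho$ are exactly the balls $\{x:\|x\|^{-}_{\alpha}\le t\}$, which sit just inside the balls $\{x:\|x\|^{+}_{\alpha}<\varepsilon\}$ carrying the $\alpha$-open spheres $\mu_{\alpha}(0,\varepsilon)$.

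First I would verify that $\rho$ is a Katsaras norm. Balancedness is immediate from $(F2)$: for $|r|\le 1$, $\rho(rx)=\sup_{u\le 1/|r|}\|x\|(u)\ge\rho(x)$. The requirement $\inf_{t>0}(t\rho)(x)=0$ for $x\neq 0$ reduces, via the scaling identity, to $\lim_{t\downarrow 0}\sup_{u\le t}\|x\|(u)=0$, which I would obtain from $(F1)$ together with the fuzzy-real-number axioms $(N_{1})$, $(N_{2})$, since for $x\neq 0$ the number $\|x\|$ differs from $\widetilde{0}$ and its mass is pushed away from the origin. Convexity is the demanding axiom: by the level identity it is equivalent to convexity of each ball $\{x:\|x\|^{-}_{\alpha}\le t\}$, i.e. to subadditivity of $x\mapsto\|x\|^{-}_{\alpha}$, which has to be squeezed out of the triangle axioms $(F3L)$, $(F3R)$ and the monotone shape maps $L,R$.

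With $\rho$ available I would establish $\mathcal{T}^{\ast}_{\|\cdot\|}=\mathcal{T}_{\rho}$ by showing that the bases of neighbourhoods of $0$ refine one another; because both are fuzzy vector topologies, translation invariance then promotes this to equality of the full topologies. The routine inclusion is that every Katsaras neighbourhood is an $\|\cdot\|$-neighbourhood: given $\theta\wedge(t\rho)$ I would choose $\alpha\uparrow\theta$ and slip the sphere $\mu_{\alpha}(0,\varepsilon)$ (with $\varepsilon\le t$) underneath it, using $\|x\|^{-}_{\alpha}\le\|x\|^{+}_{\alpha}<\varepsilon\le t$ to guarantee $\mu_{\alpha}(0,\varepsilon)\le\theta\wedge(t\rho)$ while keeping the value $\alpha$ at the origin close to $\theta$.

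The hard part is the reverse inclusion, that each $\alpha$-open sphere $\mu_{\alpha}(0,\varepsilon)$ contains a Katsaras ball $\theta\wedge(t\rho)$ with $\theta$ close to $\alpha$. By the scaling identity this forces $(t\rho)$ to vanish on $\{x:\|x\|^{+}_{\alpha}\ge\varepsilon\}$, i.e. it demands that a large right endpoint $\|x\|^{+}_{\alpha}$ push the whole support of $\|x\|$ past $t$, so that $\|x\|^{-}_{\beta}>t$ for every $\beta$. I expect this uniform comparison between the left and right $\alpha$-endpoints, equivalently the mutual equivalence of the ordinary norms $\{\|\cdot\|^{+}_{\alpha}\}_{\alpha}$ across levels, to be the main obstacle; it is exactly the information encoded in $(F3L)$, $(F3R)$ with $L(0,0)=0$ and $R(1,1)=1$, which prevent the fuzzy numbers $\|x\|$ from spreading uncontrollably between levels. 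Once this comparison is in hand, taking $t$ small enough relative to $\varepsilon$ closes both the convexity check and the reverse inclusion, giving the equivalence of the two fuzzy topologies.
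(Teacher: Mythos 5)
Your overall strategy (read $\rho$ off the level data of $\|\cdot\|$, check the Katsaras axioms, then match local bases at the origin) has the right shape, but the particular $\rho$ you chose is the wrong one, and the gap you flag at the end cannot be filled. Setting $\rho(x)=\sup_{0\le s\le 1}\|x\|(s)$ records only left-endpoint information: by your own scaling identity, $(t\rho)(x)>0$ as soon as $\|x\|$ has positive membership somewhere in $[0,t]$, i.e.\ as soon as $\|x\|^{-}_{\beta}\le t$ for some $\beta>0$. But the topology $\mathcal{T}^{\ast}_{\|\cdot\|}$ is built entirely from the right endpoints $\|x\|^{+}_{\alpha}$, and the Xiao--Zhu axioms impose no comparison between $\|x\|^{-}_{\beta}$ and $\|x\|^{+}_{\alpha}$. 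The paper itself supplies the counterexample (see the remark on fuzzy topologies of $\mathbb{K}$ and Example \ref{tor}): for Felbin's norm $\Vert x\Vert^{\ast}(t)=1-\frac{t}{|x|}$ on $0\le t\le |x|$, $x\ne 0$, one has $\Vert x\Vert^{\ast}(0)=1$ for \emph{every} $x\ne 0$, so your $\rho$ is identically $1$, $\inf_{t>0}(t\rho)(x)=1\ne 0$, and $\rho$ is not a Katsaras norm at all; the ``reverse inclusion'' you call the hard part collapses, since $\theta\wedge(t\rho)$ is then the constant $\theta$ and cannot lie below any $\alpha$-open sphere. Your hope that $(F3L)$, $(F3R)$ ``prevent the fuzzy numbers from spreading between levels'' is misplaced: those axioms constrain the sum of two vectors, not the shape of a single $\|x\|$, and the non-equivalence of $\Vert\cdot\Vert$ and $\Vert\cdot\Vert^{\ast}$ on $\mathbb{R}$ asserted in the paper is precisely the statement that left and right endpoints are not uniformly comparable.

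The repair is to build $\rho$ from the right endpoints, which is what the paper does: it takes $\rho$ to be (essentially) the characteristic function of the ball $\{x:\|x\|^{+}_{\alpha}<1\}$, verifies convexity, balancedness, absorbency and $\inf_{t>0}(t\rho)(x)=0$ directly, and then computes $(\alpha\wedge(\varepsilon\rho))(y)=\mu_{\alpha}(0,\varepsilon)(y)$ exactly, so the two local bases at zero literally coincide and no two-sided refinement argument is needed. With $\|x\|^{+}_{\alpha}$ in place of $\sup_{0\le s\le 1}\|x\|(s)$, your outline essentially becomes the paper's proof; as written, it proves only the easy containment (every Katsaras neighborhood of zero is a $\|\cdot\|$-neighborhood) and cannot yield the other.
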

\begin{proof}
Let $ \|\cdot\| $ be a Felbin-fuzzy norm on $E$. We define the fuzzy set $\rho$ on $E$ as follows:
$$\rho(x)=\begin{cases}1 &  \|x\|^{+}_{\alpha}< 1, \quad\\0 &else.\end{cases} $$
The fuzzy set $\rho$ is convex, since for $ t\in (0, 1]$,
\begin{align*}
(t\rho)(x)+((1-t)\rho)(x)&=\sup_{x=x_1+x_2}((t\rho)(x_1)\wedge((1-t)\rho)(x_2))\\
&=\sup_{x=x_1+x_2}\bigg(\begin{cases}1 &  \|x_1\|^{+}_{\alpha} < t \quad \\0 &else \end{cases}\wedge\begin{cases}1 &  \|x_2\|^{+}_{\alpha} <1-t \quad \\0 &else \end{cases}\bigg)\\
&\leq \begin{cases}1 &  \|x\|^{+}_{\alpha} < t+(1-t)=1 \quad \\0 &else \end{cases}\\
&=\rho(x).
\end{align*}

 Also $ \rho $ is balanced since for $t\neq 0$ with $\vert t \vert \leq 1$,
 \begin{align*}
(t\rho)(x)&=\rho(\dfrac{x}{t})\\
&=\begin{cases}1 &  \|x\|^{+}_{\alpha} < \vert t \vert \quad \\0 &else \end{cases}\\
&\leq \begin{cases}1 &  \|x\|^{+}_{\alpha} < 1 \quad \\0 &else \end{cases}\\
&=\rho(x).
\end{align*}

  Also we have $ \sup_{t>0}(t\rho)(x)=1 $ and $ \inf_{t>0}(t\rho)(x)=0. $ Then $ \rho $ is a Katsaras norm. Now, we prove that the fuzzy topologies generated by $ \|\cdot\| $ and $ \rho $ is equivalent. For $ 0< \alpha \leq 1 $ and $ \varepsilon >0 $ we have
  \begin{align*}
(\alpha\wedge (\varepsilon\rho))(y)&=\alpha\wedge \rho(\dfrac{y}{\varepsilon})\\
&=\alpha\wedge  \begin{cases}1 &  \|y\|^{+}_{\alpha} < \varepsilon \quad \\0 &else \end{cases}\\
&= \begin{cases}\alpha &  \|y\|^{+}_{\alpha} < \varepsilon \quad \\0 &else \end{cases}\\
&=\mu_{\alpha}(0,\varepsilon)(y).
\end{align*}

This shows that these two fuzzy topologies have a same base at zero, then these are equivalent.
\end{proof}
\begin{example}
Consider the Felbin-fuzzy normed space $ (\mathbb{R}^n,\|\cdot\|), $ where $ \|\cdot\|:\mathbb{R}^n \rightarrow F^{\ast}(\mathbb{R}) $ is defined as:
$$ \|(x_{1},x_{2},\cdots,x_{n})\|(t)= \begin{cases}1 &   t=\sqrt{x_{1}^{2}+x_{2}^{2}+\cdots +x_{n}^{2}}, \quad \\0 &else. \end{cases} $$
Then we have
 $$ \|(x_{1},x_{2},\cdots,x_{n})\|_{\alpha}^{+}=\sqrt{x_{1}^{2}+x_{2}^{2}+\cdots +x_{n}^{2}}. $$ Now, consider the Katsaras norm $$\rho(x)= \begin{cases}1 &   \Sigma^{n}_{i=1}x_{i}^{2} < 1, \quad \\0 &else. \end{cases} $$ Then by Theorem \ref{th45} the fuzzy topologies generated by $ \rho $ and $ \|\cdot\| $ are equivalent and for $ 0<\alpha \leq1 $ and $ \varepsilon >0$ we have $ \mu_{\alpha}(0,\varepsilon)(y)=(\alpha \wedge \varepsilon\rho)(y)$.
\end{example}

\section{Weak fuzzy topology}

\noindent

Let $X$ be a nonempty set, $\lbrace(Y_{\alpha}, \Gamma_{\alpha})\rbrace_{\alpha\in T}$
be a family of fuzzy topological spaces and let for each $\alpha\in T$, $f_{\alpha}: X \rightarrow Y_{\alpha}$ be a function. The weak fuzzy topology on $X$ generated by the family $ \mathcal{F}=\lbrace f_{\alpha} \rbrace_{\alpha\in T}$
is the coarsest or weakest fuzzy topology on $X$ that makes all the functions $f_{\alpha}$ fuzzy continuous. This fuzzy topology is generated by the fuzzy sets $\lbrace f^{-1}_{\alpha}(\mu): \:  \alpha\in T,\:  \mu\in \Gamma_{\alpha}\rbrace $, where the fuzzy sets $f^{-1}_{\alpha}(\mu)$ defined as follows:
$$  f^{-1}_{\alpha}(\mu)(x)= \mu(f_{\alpha}(x)) \quad \forall x\in X. $$
 Another subbase for the weak fuzzy topology consists of the sets
 $$ \lbrace f^{-1}_{\alpha}(\mu):\:  \alpha\in T,\:  \mu\in S_{\alpha}\rbrace ,$$ where $S_{\alpha}$ is a subbase for $\Gamma_{\alpha}$.
We denote the weak fuzzy topology on $X$ by $\sigma_{f}(X,\mathcal{F})$. A base for the weak fuzzy topology can be constructed as the collection of the sets of the form $\bigwedge^{n}_{k=1}f^{-1}_{\alpha_{k}}(\mu_{\alpha_{k}})$, where $\mu_{\alpha_{k}}\in \Gamma_{\alpha_{k}}$, $ \lbrace \alpha_{1},\alpha_{2},\cdots,\alpha_{n} \rbrace \subset T$.\\
\begin{example}
Suppose $ (X_{\alpha},\tau_{\alpha})_{\alpha \in T} $ is a family of fuzzy topological spaces, and $ X=\Pi_{\alpha \in T} X_\alpha. $ That is $ X=\lbrace f:T\rightarrow \cup X_{\alpha}:f(\alpha) \in X_{\alpha} \rbrace $. By the axiom of choice, we have $ X\neq \emptyset $. Let $ f \in X $, then $ f=(f_{\alpha})_{\alpha \in T} ,$ where $ f_\alpha=f(\alpha). $ Define $ P_{\alpha}:X\rightarrow X_{\alpha} $ by $ P_{\alpha}(f)=f_{\alpha}. $ Then the weak fuzzy topology generated by the family $ \mathcal{F}=\lbrace P_{\alpha}: \alpha \in T \rbrace $ has the subbasis $ \lbrace P^{-1}_{\alpha}(\mu_{\alpha}): \alpha \in T, \mu_{\alpha} \in \tau_{\alpha} \rbrace. $ But this is a subbasis for the product fuzzy topology on $ X $. Therefore $ \sigma_f(X,\mathcal{F}) $ coincides with the product topology on $X$.
\end{example}
 The following lemma is very important in the studing of weak fuzzy topologies.
\begin{lemma}\label{lkj}
A net satisfies $ x_{j}\xrightarrow{\sigma_{f}(X,\mathcal{F})} x $ for the weak fuzzy topology $ \sigma_{f}(X,\mathcal{F}) $ if and only if $ f_{\alpha}(x_{j})\longrightarrow f_{\alpha}(x) $ for each $ \alpha\in T $.
\end{lemma}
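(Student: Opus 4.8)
The plan is to prove the two implications separately by unwinding the definition of fuzzy convergence of a net against the subbasic description of $\sigma_{f}(X,\mathcal{F})$, the decisive auxiliary fact being that a fuzzy continuous map sends convergent nets to convergent nets. I would first record this auxiliary fact in the abstract setting. Suppose $g\colon X\to Y$ is fuzzy continuous and $x_{j}\longrightarrow x$ in $X$. Given a fuzzy neighborhood $\nu$ of $g(x)$ and $0<\varepsilon<\nu(g(x))$, choose an open fuzzy set $\eta\leq\nu$ with $\eta(g(x))=\nu(g(x))>0$. Then $g^{-1}(\eta)$ is open by continuity and $g^{-1}(\eta)(x)=\eta(g(x))>\varepsilon$, so $g^{-1}(\eta)$ is a fuzzy neighborhood of $x$. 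Convergence $x_{j}\longrightarrow x$ produces $j_{0}$ with $g^{-1}(\eta)(x_{j})>\varepsilon$ for $j>j_{0}$; since $g^{-1}(\eta)(x_{j})=\eta(g(x_{j}))\leq\nu(g(x_{j}))$, this gives $\nu(g(x_{j}))>\varepsilon$, hence $g(x_{j})\longrightarrow g(x)$.

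For the forward implication there is then nothing left to do: each $f_{\alpha}$ is fuzzy continuous by the very construction of the weak fuzzy topology, so applying the auxiliary fact with $g=f_{\alpha}$ yields $f_{\alpha}(x_{j})\longrightarrow f_{\alpha}(x)$ for every $\alpha\in T$. The reverse implication is the substantive half. Assume $f_{\alpha}(x_{j})\longrightarrow f_{\alpha}(x)$ for all $\alpha$, and let $\mu$ be a fuzzy neighborhood of $x$ with $0<\varepsilon<\mu(x)$. Passing to an open $\eta\leq\mu$ with $\eta(x)=\mu(x)>\varepsilon$, I would invoke the base description given just before the lemma: every open set of $\sigma_{f}(X,\mathcal{F})$ is a supremum of basic open sets of the form $\bigwedge_{k=1}^{n}f^{-1}_{\alpha_{k}}(\mu_{\alpha_{k}})$ with $\mu_{\alpha_{k}}\in\Gamma_{\alpha_{k}}$.

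Writing $\eta$ as such a supremum and evaluating at $x$, the inequality $\eta(x)>\varepsilon$ forces at least one basic summand $\beta=\bigwedge_{k=1}^{n}f^{-1}_{\alpha_{k}}(\mu_{\alpha_{k}})$ to satisfy $\beta(x)>\varepsilon$, while $\beta\leq\eta\leq\mu$. For each index $k$ this gives $\mu_{\alpha_{k}}(f_{\alpha_{k}}(x))=f^{-1}_{\alpha_{k}}(\mu_{\alpha_{k}})(x)\geq\beta(x)>\varepsilon>0$, so each open $\mu_{\alpha_{k}}$ is a fuzzy neighborhood of $f_{\alpha_{k}}(x)$ at a level exceeding $\varepsilon$. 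Applying the hypothesis $f_{\alpha_{k}}(x_{j})\longrightarrow f_{\alpha_{k}}(x)$ to each of the finitely many indices produces thresholds $j_{1},\dots,j_{n}\in T$ with $\mu_{\alpha_{k}}(f_{\alpha_{k}}(x_{j}))>\varepsilon$ for all $j>j_{k}$.

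The only genuine obstacle is the bookkeeping that collapses these finitely many thresholds into a single one, and here I would use that $T$ is directed: choose $j_{0}\in T$ dominating $j_{1},\dots,j_{n}$. Then for $j>j_{0}$ every factor satisfies $f^{-1}_{\alpha_{k}}(\mu_{\alpha_{k}})(x_{j})=\mu_{\alpha_{k}}(f_{\alpha_{k}}(x_{j}))>\varepsilon$, whence $\beta(x_{j})=\min_{1\leq k\leq n}f^{-1}_{\alpha_{k}}(\mu_{\alpha_{k}})(x_{j})>\varepsilon$, and since $\beta\leq\mu$ we obtain $\mu(x_{j})>\varepsilon$ for all $j>j_{0}$. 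As $\mu$ and $\varepsilon$ were arbitrary, this is precisely fuzzy convergence $x_{j}\longrightarrow x$ in $\sigma_{f}(X,\mathcal{F})$, completing the reverse direction. The single point demanding care throughout is the reduction from an arbitrary neighborhood to one basic open set still large enough at $x$, which rests entirely on the explicit base of finite infima of subbasic sets recorded before the lemma.
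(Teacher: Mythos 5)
Your proof is correct and follows essentially the same route as the paper: the forward direction from fuzzy continuity of the maps $f_{\alpha}$, and the converse by verifying convergence on basic neighborhoods $\bigwedge_{k=1}^{n}f^{-1}_{\alpha_{k}}(\mu_{\alpha_{k}})$ using directedness of the index set. Your version is in fact tidier at two points where the paper is loose: you justify reducing an arbitrary neighborhood to a single basic open set via the supremum decomposition (the paper only checks basic $V$), and by working with one level $\varepsilon$ you avoid the paper's slip of concluding each factor exceeds $\max\{r_{1},\dots,r_{n}\}$ when only $\min$ is warranted.
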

\begin{proof}
If we consider the topology $\sigma_{f}(X,\mathcal{F})$ on $X$, then the functions $f_{\alpha}$ are fuzzy continuous. This shows that if $ x_{j}\xrightarrow{\sigma_{f}(X,\mathcal{F})} x $, then $ f_{\alpha}(x_{j})\longrightarrow f_{\alpha}(x) $ for each $ \alpha\in T $. Conversely, let $ V= \bigwedge_{k=1}^n f^{-1}_{\alpha_{k}}(\mu_{\alpha_{k}}) $ be a weak fuzzy neighborhood of $x$, where $ \mu_{\alpha_{k}}\in \Gamma_{\alpha_{k}} $. For each $ k $, if $ f_{\alpha_{k}}(x_{j})\rightarrow f_{\alpha_{k}}(x) $, then for each $0<r_k<\mu_{\alpha_k}(f_{\alpha_k}(x))$ there is $j_k$ such that for each $ j>j_k $ we have $ \mu_{\alpha_{k}}(f_{\alpha_{k}}(x_{j}))>r_k $. Therefor for $j> max\{j_1,j_2,\cdots,j_n\}=j'$, we have $$f^{-1}_{\alpha_{k}}(\mu_{i_{k}})(x_{j})>max\{r_1,r_2,\cdots,r_n\}=r.$$
 Then $V(x_j)>r$ for each $j>j'$. This proves our claim.
\end{proof}
\begin{remark}
On $\mathbb{K}$, if we consider the fuzzy topology is obtained from the usual topology of $ \mathbb{K} $ that is the family of fuzzy sets $ \mu:\mathbb{K} \rightarrow I, $ which are lower semi-continuous, then endowed with this topology $ \mathbb{K} $ is a fuzzy topological vector space. This topology is compatible with the fuzzy norm
$$\rho(x)=\begin{cases}1 &|x|<1,\\0 & else.\end{cases}$$
For this, we prove that the collection $B_\rho=\{\theta\wedge (t\rho) \:| \:\: t>0,0<\theta\leq 1\}$ is a base of fuzzy neighborhoods for zero. Let $\mu$ be a neighborhood of zero in $\mathbb{K}$. Then $\mu$ is lower semi-continuous and $\mu(0)>0$. Let $0<r<\mu(0)$. Then $\mu^{-1}(r,1]$ is open in $\mathbb{K}$ and $0\in\mu^{-1}(r,1]$. Then there is $m>0$ such that $$\{x\in\mathbb{K}:|x|<m\}\subseteq\mu^{-1}(r,1].$$
Now, we claim that $r\wedge(m\rho)\leq \mu$. Indeed, for $x\in\mathbb{K}$, we have
$$r\wedge(m\rho)(x)=r\wedge\rho(\frac{x}{m})=\rho(x)=\begin{cases}r &|x|<m,\\0 & else.\end{cases}$$ Then, we have $r\wedge(m\rho)(x)\leq\mu(x)$.
Also this topology is compatible with the Flebin-fuzzy norm $\Vert .\Vert:\mathbb{K}\rightarrow F^{\ast}(\mathbb{R})$, defined for $r\in\mathbb{K}$ as follows:
$$\Vert x\Vert (t)=\begin{cases}1 &t=|x|,\\0 & else,\end{cases}$$
with $L=\min$ and $R=\max$. But this topology is not the only fuzzy Hausdorff linear topology on $\mathbb{K}$. For example it was showen in \cite{g} that the fuzzy norm $$\Vert x \Vert^{\ast}(t)=\begin{cases}1-\frac{t}{|x|} &0 \leq t \leq |x|, x\neq0, \\0 & else,\end{cases}$$ is not equivalent with the fuzzy norm
$$\Vert x\Vert (t)=\begin{cases}1 &t=|x|,\\0 & else.\end{cases}$$ Then, we can consider more than one topology on $\mathbb{K}$.
\end{remark}
\begin{definition}
 Let $(E,\tau)$ be a fuzzy topological vector space and $ \Gamma $ be locally convex Hausdorff linear topology on $ \mathbb{K} $. We denote the family of all fuzzy continuous linear functionals $f: (E,\tau)\rightarrow (\mathbb{K},\Gamma) $ by $ E^{\prime}_{\Gamma} $ and call it the $ \Gamma- $ fuzzy dual of $E$. Also, we denote the algebraic dual of $ E $ by $ E^{\ast} $.
\end{definition}
In \cite{ro}, the concept of dual pair defined as follows:\\
We call $(E,E')$ a dual pair, whenever $E$ and $E'$ are two vector spaces over the same $ \mathbb{K} $ scaler field and  $ \langle x, x' \rangle $ is a bilinear form on $E$ and $ E' $ satisfying the following conditions:
\begin{enumerate}
\item[$(D)$] For each $ x\neq0 $ in $ E $, there is $ x^{\prime}\in E^{\prime} $ such that $ \langle x, x^{\prime}\rangle\neq0 $.\\
\item[$(D^{\prime})$] For each $ x^{\prime}\neq0 $ in $ E^{\prime} $ there is $ x\in E $ such that $ \langle x, x^{\prime}\rangle\neq0 $.\\
\end{enumerate}
\begin{definition}
Let $ (E,E') $ be a dual pair. Then each $x'\in E'$ is a linear functional on $ E $ by taking:
$$ x'(x)=\langle x, x'\rangle \:\:\:\:\:\: x\in E. $$
Consider on $ \mathbb{K} $ a locally convex Hausdorff linear topology $ \Gamma .$ We denote the weak fuzzy topology on $ E $ generated by mapping $ x': E\rightarrow \mathbb{K} $ by $ \sigma^{\Gamma}_{f}(E,E'). $
\end{definition}
We note that if $ \Gamma_1 $ and $ \Gamma_2 $ are two topologies on $ \mathbb{K} $ and $ \Gamma_1 \subset \Gamma_2 $, then $ \sigma^{\Gamma_1}_{f}(E,E') \subset \sigma^{\Gamma_2}_{f}(E,E'). $
\begin{theorem}
Let $(E,E')$ be a dual pair. Then $E$ endowed with $\sigma^{\Gamma}_f(E,E')$ is a locally convex fuzzy topological vector space.
\end{theorem}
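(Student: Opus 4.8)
\section*{Proof proposal}

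The plan is to verify the two defining features separately: first that $\sigma^{\Gamma}_{f}(E,E')$ is a fuzzy vector topology (so that $E$ is an FTVS), and then that it admits a base of neighbourhoods of zero consisting of convex fuzzy sets. Throughout I would lean on the universal property of the weak fuzzy topology: a map $g\colon Z\to E$ from a fuzzy topological space $Z$ into $E$ (endowed with $\sigma^{\Gamma}_{f}(E,E')$) is fuzzy continuous if and only if $x'\circ g$ is fuzzy continuous for every $x'\in E'$. This holds because each subbasic open set of $\sigma^{\Gamma}_{f}(E,E')$ has the form $(x')^{-1}(\mu)$ with $\mu\in\Gamma$, and the preimage operation $g^{-1}$ commutes with the lattice operations $\wedge$ and $\sup$ that generate the topology, so $g^{-1}\big((x')^{-1}(\mu)\big)=(x'\circ g)^{-1}(\mu)$. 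I would also record at the outset that $(\mathbb{K},\Gamma)$ is itself a fuzzy topological vector space, so that addition $m\colon \mathbb{K}\times\mathbb{K}\to\mathbb{K}$ and multiplication $s\colon\mathbb{K}\times\mathbb{K}\to\mathbb{K}$ are fuzzy continuous.

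For continuity of addition $f\colon E\times E\to E$, $(x,y)\mapsto x+y$, it suffices by the universal property to check that $x'\circ f$ is continuous for each $x'\in E'$. Linearity of $x'$ gives $x'(x+y)=x'(x)+x'(y)$, so $x'\circ f=m\circ(x'\times x')$, where $x'\times x'\colon E\times E\to\mathbb{K}\times\mathbb{K}$ has continuous components $x'\circ\mathrm{pr}_{1}$ and $x'\circ\mathrm{pr}_{2}$ (the projections being continuous because the product fuzzy topology is, by construction, the weak fuzzy topology generated by them); hence $x'\times x'$ is continuous and so is $x'\circ f$. The same scheme handles scalar multiplication $g\colon\mathbb{K}\times E\to E$, $(t,x)\mapsto tx$: here $x'(tx)=t\,x'(x)$, so $x'\circ g=s\circ(\mathrm{id}_{\mathbb{K}}\times x')$, again a composite of continuous maps. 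A fully equivalent route, should one prefer nets, is to combine Lemma~\ref{lkj} with continuity of the operations on $\mathbb{K}$: if $x_{j}\to x$ and $y_{j}\to y$ weakly then $x'(x_{j})\to x'(x)$ and $x'(y_{j})\to x'(y)$ for every $x'$, whence $x'(x_{j}+y_{j})\to x'(x+y)$ and so $x_{j}+y_{j}\to x+y$.

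For local convexity I would start from the fact that, $\Gamma$ being locally convex, zero has a base of convex fuzzy neighbourhoods $\mu$ in $\mathbb{K}$. The key lemma to prove is that the preimage of a convex fuzzy set under a linear map is convex. Unwinding the paper's definition, $\mu$ is convex precisely when $\mu(\lambda u+(1-\lambda)v)\ge\mu(u)\wedge\mu(v)$ for all $u,v$ and $\lambda\in[0,1]$; for a linear functional $x'$ this yields $(x')^{-1}(\mu)(\lambda u+(1-\lambda)v)=\mu\big(\lambda x'(u)+(1-\lambda)x'(v)\big)\ge(x')^{-1}(\mu)(u)\wedge(x')^{-1}(\mu)(v)$, so $(x')^{-1}(\mu)$ is convex. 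Since a finite infimum of convex fuzzy sets is again convex, the basic neighbourhoods $\bigwedge_{k=1}^{n}(x'_{k})^{-1}(\mu_{k})$ of zero are convex, giving a convex base and establishing local convexity.

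The routine part is the algebra of linearity; the points requiring genuine care, and which I would treat as the crux, are two bookkeeping facts about the fuzzy category. First, that the weak (initial) fuzzy topology really does enjoy the universal property above — this rests on $g^{-1}$ preserving $\wedge$ and $\sup$, which must be checked against the definition of the preimage of a fuzzy set. Second, that the Minkowski-sum definition of convexity used in the paper, $t\mu+(1-t)\mu\le\mu$, coincides with the pointwise inequality $\mu(\lambda u+(1-\lambda)v)\ge\mu(u)\wedge\mu(v)$; verifying this equivalence, via the substitution $u=x_{1}/t$, $v=x_{2}/(1-t)$ in the definition of $t\mu+(1-t)\mu$, is what makes the convexity argument clean, after which both continuity and local convexity follow smoothly.
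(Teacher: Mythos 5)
Your proposal is correct, but it reaches the conclusion by a genuinely different route than the paper. The paper argues by hand with basic neighbourhoods: given $V=\bigwedge_{i=1}^{n}f_{i}^{-1}(\mu_{i})$ at $z=x+y$, it pulls back neighbourhoods $\omega_{i},\psi_{i}$ of $f_{i}(x),f_{i}(y)$ in $\mathbb{K}$ with $\omega_{i}+\psi_{i}\le\mu_{i}$ and checks directly that $U+O\le V$ for $U=\bigwedge f_{i}^{-1}(\omega_{i})$, $O=\bigwedge f_{i}^{-1}(\psi_{i})$, with an analogous computation for scalar multiplication. You instead invoke the universal property of the initial fuzzy topology, writing $x'\circ f=m\circ(x'\times x')$ and $x'\circ g=s\circ(\mathrm{id}_{\mathbb{K}}\times x')$ and reducing everything to continuity of the operations on $(\mathbb{K},\Gamma)$; this is shorter and more structural, and the two facts it rests on are both sound here --- $g^{-1}$ preserves $\sup$ and $\wedge$ because $g^{-1}(\mu)=\mu\circ g$, and the product fuzzy topology is itself the weak topology of the projections (the paper's own Example~3.1). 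What the paper's computation buys in exchange is the explicit inequality $U+O\le V$, i.e.\ quantitative information about how neighbourhoods add, which the categorical argument does not exhibit. On local convexity your treatment is actually the more defensible one: unwinding $t\mu+(1-t)\mu\le\mu$ via the substitution $u=x_{1}/t$, $v=x_{2}/(1-t)$ does give the pointwise condition $\mu(tu+(1-t)v)\ge\mu(u)\wedge\mu(v)$, which passes to preimages under linear maps and to finite infima; the paper instead asserts $\mu(tx'(x)+(1-t)x'(y))\le t\mu(x'(x))+(1-t)\mu(x'(y))$, which is not the fuzzy convexity condition at all. One small caution: your parenthetical net-based alternative needs more care than you give it, since in a general fuzzy topological space preservation of net convergence does not by itself certify fuzzy continuity; but your primary argument does not depend on it.
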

\begin{proof}
We prove that the addition and scaler multiplication are continuous. For $x,y,z\in E$, let $z=x+y$. Also suppose $V=\bigwedge_{i=1}^n f_i^{-1}(\mu_i)$ is a weak fuzzy neighborhood of $z$, where $f_i\in E'$ and $\mu_i$ is a fuzzy neighborhood in $ \mathbb{K} $. Then $V(z)>0$. This shows that $\mu_i(f_i(z))=f_i^{-1}(\mu_i)(z)>0$ for each $i=1,2,\cdots,n$. Then for each $i=1,2,\cdots,n$, $\mu_i$ is a fuzzy neighborhood of $f_i(z)$ in $\mathbb{K}$. Now since
we have $f_i(x)+f_i(y)=f_i(z)$ and $\mathbb{K}$ is a fuzzy topological vector space, for each $i=1,2,\cdots,n$, there is fuzzy neighborhoods $\omega_i$ and $\psi_i$ of $f_i(x)$ and $f_i(y)$ (respectively) such that if $\omega_i(a)>0$ and $\psi_i(b)>0$, then $\mu_i(a+b)>0$ and $\omega_i+\psi_i\leq\mu_i$. Now we set
\begin{center}
$U=\bigwedge_{i=1}^n f_i^{-1}(\omega_i)$ and $O=\bigwedge_{i=1}^n f_i^{-1}(\psi_i)$.
\end{center}
Then, we have
$$U(x)=\bigwedge_{i=1}^n f_i^{-1}(\omega_i)(x)=\bigwedge_{i=1}^n \omega_i(f_i(x))>0,$$
 and
  $$O(y)=\bigwedge_{i=1}^n f_i^{-1}(\psi_i)(y)=\bigwedge_{i=1}^n \psi_i(f_i(y))>0.$$
  Therefore $U$ and $V$ are weak fuzzy neighborhoods of $x$ and $y$ respectively. Now, let for $m,n\in E$, $U(m)>0$ and $O(n)>0$. Then for each $i=1,2,\cdots,n$, we have $\omega_i(f_i(m))>0$ and $\psi_i(f_i(n))>0$ and therefore, $\mu_i(f_i(m+n))>0$. Thus
  \begin{align}
  V(m+n)&=\bigwedge_{i=1}^n f_i^{-1}(\mu_i)(m+n)\nonumber\\&=\bigwedge_{i=1}^n \mu_i(f_i(m+n))>0\nonumber.
  \end{align}
   Also, since $\omega_i+\psi_i\leq\mu_i$ for all $i=1,2,\cdots,n$, we have
  \begin{align}
  (U+O)(x)&=\sup_{x=x_1+x_2}\bigwedge_{i=1}^{n}(\omega_i(f_i(x_1))\wedge\psi_i(f_i(x_2)))
  \nonumber\\ &=\bigwedge_{i=1}^{n}\sup_{x=x_1+x_2}(\omega_i(f_i(x_1))\wedge\psi_i(f_i(x_2)))\nonumber \\&= \bigwedge_{i=1}^{n}(\omega_i+\psi_i)(f_i(x))\nonumber \\&\leq\bigwedge_{i=1}^{n}\mu_i(f_i(x))\nonumber\\&=
  V(x).
  \end{align}
   Therefore $U+O\leq V$. Then the addition is fuzzy continuous with respect to $\sigma^{\Gamma}_f(E,E')$.

  Now, we show that the scaler multiplication is continuous. For $t\in \mathbb{K} $ and $x\in E$, let $M= \bigwedge_{i=1}^n f_i^{-1}(\gamma_i)$ is a weak fuzzy neighborhood of $tx$. Then for each $i=1,2,\cdots,n$, we have $$\gamma_i(tf_i(x))=\gamma_i(f_i(tx))=f_i^{-1}(\mu_i)(tx)>0.$$
   Then $\mu_i$ is fuzzy neighborhood of $tf_i(x)$ in $ \mathbb{K} $. Since $ \mathbb{K} $ is a fuzzy topological vector space, there is fuzzy neighborhood $\eta_i$ and $\vartheta_i$ of $t$ and $f_i(x)$ in $ \mathbb{K} $ such that if for $a,b\in \mathbb{K} $, $\eta_i(a)>0$ and $\vartheta_i(b)>0$ then $\mu_i(ab)>0$ and $a\vartheta_i\leq \mu_i$. Now, we set
  $$S=\bigwedge_{i=1}^n f_i^{-1}(\vartheta_i).$$
  We have
   $$S(x)=\bigwedge_{i=1}^n f_i^{-1}(\vartheta_i)(x)=\bigwedge_{i=1}^n\vartheta_i(f_i(x))>0.$$
  Then $S$ is a weak fuzzy neighborhood of $x$. Now, let for $a\in \mathbb{K}$, $\eta_i(a)>0$ and for $y\in S$, $S(y)>0$, then $\vartheta_i(f_i(y))>0$ for each $i=1,2,\cdots,n$. The above arguments show that $f_i^{-1}(\mu_i)(ay)=\mu_i(af_i(y))>0$ for each $i$. Then $V(ay)=\bigwedge_{i=1}^n f_i^{-1}(\mu_i)(ay)>0$. Also for $a\neq 0$ with $\eta_i(a)>0$, we have
  \begin{align}
   (aS)(y)&=\bigwedge_{i=1}^n f_i^{-1}(\vartheta_i)(\frac{y}{a})\nonumber \\ &=\bigwedge_{i=1}^n (\vartheta_i(f_i(\frac{y}{a}))\nonumber \\ &=\bigwedge_{i=1}^n (a\vartheta_i(f_i(y))\leq \mu_i(y).\nonumber
  \end{align}
   Then the scaler multiplication is fuzzy continuous. Now, we prove that $ \sigma^{\Gamma}_f(E,E') $ is locally convex. Since $ (\mathbb{K},\Gamma) $ is locally convex, $ \Gamma $ has a base of convex neighborhoods. Let $ \mu \in \Gamma $ be convex. It is enough to show that $ x'^{-1}(\mu) $ is convex for $ x' \in E'. $ For $ t \in I $ and $ x,y \in E $ we have
   \begin{align}
   x'^{-1}(\mu)(tx+(1-t)y)&=\mu(x'(tx+(1-t)y)) \nonumber \\ &=\mu(tx'(x)+(1-t)x'(y)) \nonumber \\ &\leq t\mu(x'(x))+(1-t)\mu(x'(y)) \nonumber \\ &= tx'^{-1}(\mu)(x)+(1-t)x'^{-1}(\mu)(y). \nonumber
  \end{align}
   Therefore $ x'^{-1}(\mu) $ is convex.
  \end{proof}
  \begin{remark}
 We note that since in a fuzzy topological vector space the addition and scaler multiplication are continuous, one can use local basis instead of basis for a fuzzy linear topology.
\end{remark}
If $(E,E^{\prime})$ is a dual pair and $S$ is a local base in $ \mathbb{K} $, the sets $\bigwedge^{n}_{i=1}x^{-1}_{i}(\mu_{i})$ where $ x_{1},x_{2},\cdots,x_{n}\in E^{\prime} $ and $ \mu_{1},\mu_{2},\cdots,\mu_{n}\in S $ consist a base for $\sigma^{\Gamma}_{f}(E,E^{\prime})$. If we consider the usual topology on $\mathbb{K}$ which is compatible with the fuzzy norm
$$\rho(x)=\begin{cases}1 &|x|<1,\\0 & else,\end{cases}$$ then, the collection $$B_\rho=\{\theta\wedge (t\rho)\: |\:\: t>0,0<\theta\leq 1\},$$ is a base of fuzzy neighborhood of zero in $\mathbb{K}$. This shows that the collection
$$B_\rho=\{\bigwedge_{i=1}^n f_i^{-1}(\theta_i\wedge (t_i\rho))\: |\:\:t_i>0,0<\theta_i\leq 1, f_i\in E'\},$$ is a base of fuzzy neighborhood of zero for $\sigma^{\Gamma}_f(E,E')$.
\begin{lemma}\cite{ro} \label{l478}
If $f_0,f_1,\cdots,f_n$ are linear functionals on a vector space $E$, then either $f_0$ is a linear combinations of $f_1,f_2,\cdots,f_n$, or there is $a\in E$ such that $f_0(a)=1$ and $$f_1(a)=f_2(a)=\cdots=f_n(a)=0.$$
\end{lemma}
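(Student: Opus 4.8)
The plan is to split into two cases according to whether the common kernel of $f_1,\dots,f_n$ already forces $f_0$ to vanish. Put $N=\bigcap_{i=1}^{n}\ker f_i=\{x\in E: f_1(x)=\cdots=f_n(x)=0\}$. The two alternatives in the statement correspond exactly to the cases $N\not\subseteq\ker f_0$ and $N\subseteq\ker f_0$. The first case is immediate: if $N\not\subseteq\ker f_0$, choose $b\in N$ with $f_0(b)\neq0$ and set $a=b/f_0(b)$; then $f_0(a)=1$ while $f_i(a)=f_i(b)/f_0(b)=0$ for every $i$, which is precisely the second conclusion of the lemma.

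The substance lies in the remaining case $N\subseteq\ker f_0$, where I must exhibit scalars $c_1,\dots,c_n$ with $f_0=\sum_{i=1}^{n}c_if_i$. The idea is to bundle $f_1,\dots,f_n$ into a single linear map $T:E\to\mathbb{K}^n$ given by $T(x)=(f_1(x),\dots,f_n(x))$, so that $\ker T=N$. Because $N\subseteq\ker f_0$, whenever $T(x)=T(y)$ we have $x-y\in N$ and hence $f_0(x)=f_0(y)$; thus $f_0$ is constant on the fibres of $T$ and descends to a well-defined linear functional $g$ on the image subspace $T(E)\subseteq\mathbb{K}^n$ satisfying $f_0=g\circ T$.

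The final step is to represent $g$ by a coefficient vector. Since $T(E)$ is a subspace of the finite-dimensional space $\mathbb{K}^n$, I extend $g$ to a linear functional $\widetilde{g}$ on all of $\mathbb{K}^n$; every such functional has the form $\widetilde{g}(z_1,\dots,z_n)=\sum_{i=1}^{n}c_iz_i$ for suitable scalars $c_i$. Composing with $T$ gives $f_0(x)=\widetilde{g}\bigl(f_1(x),\dots,f_n(x)\bigr)=\sum_{i=1}^{n}c_if_i(x)$ for all $x\in E$, so $f_0$ is the desired linear combination, which is the first conclusion of the lemma.

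I expect the main obstacle to be the factoring argument of the middle paragraph, namely verifying carefully that the inclusion $N\subseteq\ker f_0$ genuinely lets $f_0$ descend through $T$, together with the routine observation that a linear functional on a subspace of $\mathbb{K}^n$ extends to one on $\mathbb{K}^n$ and is therefore given by a coefficient vector. An alternative to this $\mathbb{K}^n$-factoring would be a direct induction on $n$, peeling off one functional at a time, but that approach requires more bookkeeping; the map-into-$\mathbb{K}^n$ formulation keeps the dichotomy transparent and reduces the whole argument to elementary finite-dimensional linear algebra.
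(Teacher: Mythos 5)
Your argument is correct and complete: the dichotomy via $N=\bigcap_{i=1}^{n}\ker f_i$, the normalization $a=b/f_0(b)$ in the case $N\not\subseteq\ker f_0$, and the factorization of $f_0$ through $T(x)=(f_1(x),\dots,f_n(x))$ followed by extension of the induced functional from $T(E)$ to all of $\mathbb{K}^n$ are all sound. The paper gives no proof of its own, only the citation to Robertson--Robertson (Chapter II, Section 3), and your map-into-$\mathbb{K}^n$ argument is essentially the standard proof found there, so there is nothing further to compare.
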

\begin{proof}
See \cite{ro}, Chapter II, Section 3.
\end{proof}
\begin{proposition}
If $(E,E') $ is a dual pair, then the fuzzy dual of $ E $ under $ \sigma^{\Gamma}_{f}(E,E') $ is $ E' $.
\end{proposition}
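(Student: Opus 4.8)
The plan is to establish the two inclusions $E' \subseteq E'_\Gamma$ and $E'_\Gamma \subseteq E'$ separately, writing $E'_\Gamma$ for the $\Gamma$-fuzzy dual of $(E,\sigma^\Gamma_f(E,E'))$. The first inclusion is immediate from the construction of the weak fuzzy topology: $\sigma^\Gamma_f(E,E')$ is by definition the coarsest fuzzy topology making every map $x'\colon E\to\mathbb{K}$, $x'\in E'$, fuzzy continuous, so each $x'\in E'$ is already a fuzzy continuous linear functional and hence lies in $E'_\Gamma$.

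For the reverse inclusion, let $f\colon(E,\sigma^\Gamma_f(E,E'))\to(\mathbb{K},\Gamma)$ be a fuzzy continuous linear functional; the goal is $f\in E'$. First I would use that $(\mathbb{K},\Gamma)$ is fuzzy Hausdorff: fixing any $z\neq 0$ in $\mathbb{K}$ and separating $0$ from $z$, I obtain an open fuzzy set $\eta\in\Gamma$ with $\eta(0)=1$ and $\eta(z)=0$ (the value $\eta(z)=0$ follows because the separating pair $\eta,\beta$ satisfies $\eta\wedge\beta=0$ while $\beta(z)=1$). Thus $\eta$ is a fuzzy neighborhood of $0$ in $\mathbb{K}$ that vanishes at the point $z$.

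Since $f$ is fuzzy continuous and $f(0)=0$, the fuzzy set $f^{-1}(\eta)$ is open with $f^{-1}(\eta)(0)=\eta(0)=1$, so it is a fuzzy neighborhood of zero in $E$. Appealing to the base of neighborhoods of zero for $\sigma^\Gamma_f(E,E')$ recorded before the statement, I can choose a basic neighborhood $V=\bigwedge_{i=1}^n x_i^{-1}(\mu_i)\leq f^{-1}(\eta)$ with $V(0)>0$, where $x_1,\dots,x_n\in E'$ and each $\mu_i$ is a fuzzy neighborhood of $0$ in $(\mathbb{K},\Gamma)$, so $\mu_i(0)>0$. The key computation is then to test $V$ along the common kernel $N=\bigcap_{i=1}^n\ker x_i$: for $a\in N$ and any scalar $\lambda$ one has $V(\lambda a)=\bigwedge_{i=1}^n\mu_i(\lambda x_i(a))=\bigwedge_{i=1}^n\mu_i(0)>0$, and since $V\leq f^{-1}(\eta)$ this forces $\eta(\lambda f(a))\geq V(\lambda a)>0$ for every $\lambda$. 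Were $f(a)\neq 0$, the choice $\lambda=z/f(a)$ would give $\eta(z)>0$, contradicting $\eta(z)=0$; hence $f(a)=0$, i.e.\ $f$ vanishes on $N$.

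Finally I would apply Lemma \ref{l478} with $f_0=f$ and $f_1=x_1,\dots,f_n=x_n$. Because $f$ annihilates the common kernel $N$, the alternative in that lemma producing an $a$ with $f(a)=1$ and $x_1(a)=\cdots=x_n(a)=0$ cannot occur, so $f$ must be a linear combination of $x_1,\dots,x_n\in E'$; as $E'$ is a vector space, $f\in E'$, completing the inclusion. I expect the main obstacle to be the passage from fuzzy continuity to the basic neighborhood $V$ and the verification that $V$ stays strictly positive along the whole ray $\{\lambda a:\lambda\in\mathbb{K}\}$, since this is precisely where the membership-valued (rather than set-theoretic) nature of the fuzzy topology must be handled with care; once $f|_N=0$ is secured, the remainder is the classical dual-pair argument.
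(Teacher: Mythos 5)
Your proof is correct and follows essentially the same route as the paper's: both pull back a $\Gamma$-neighborhood of $0$ that distinguishes $0$ from a nonzero scalar, dominate it from below by a basic weak neighborhood $\bigwedge_{i=1}^{n}x_i^{-1}(\mu_i)$, and invoke Lemma \ref{l478} to conclude that $f$ is a linear combination of $x_1,\dots,x_n\in E'$. The only cosmetic difference is that you rule out the second alternative of the lemma by first showing $f$ vanishes on $\bigcap_{i=1}^{n}\ker x_i$ via a scaling argument, whereas the paper derives the contradiction $\mu(0)\leq\mu(1)$ directly at the point $a$ that the lemma produces.
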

\begin{proof}
Let $ f $ be a linear form on $ E $ continuous under $ \sigma^{\Gamma}_{f}(E,E') $ and let $ S $ be a base in $ \mathbb{K} $. For each $ \mu\in S $ with $ \mu(0)>\mu(1) $, there is $ x^{\prime}_{1},x'_{2},\cdots,x'_{n}\in E^{\prime} $ such that $$ \bigwedge^{n}_{i=1}x^{-1}_{i}(\mu)\leq f^{-1}(\mu) $$ and $$ \bigwedge^{n}_{i=1}x^{-1}_{i}(\mu)(0)=f^{-1}(\mu)(0)>0 $$ therefor for $ x\in E,  \mu(x_{i}(x))\leq\mu(f(x)) $. Now by Lemma \ref{l478}, either $ f $ is a linear combination of $ x^{\prime}_{1},x^{\prime}_{2},\cdots,x^{\prime}_{n} $ or there is some $ a\in E $ such that $ f(a)=1 $ but $ x'_i(a)=0 $ for all $i=1,2,\cdots,n$. This shows that $ \mu(0)\leq\mu(1) $ and this contradiction. Then
$$ f=\sum\lambda_{i}x^{\prime}_{i}\in E^{\prime}$$ on the other hand if $ x^{\prime}\in E^{\prime} $ then it is continuous under $ \sigma^{\Gamma}_{f}(E,E^{\prime}) $ by the definition of weak fuzzy topology.
\end{proof}
\begin{theorem}
Let $E$ be a vector space and $E^*$ be its algebraic dual. Then $E^*$ is complete under the fuzzy topology $\sigma^{\Gamma}_f(E^*,E)$.
\end{theorem}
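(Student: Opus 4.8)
The plan is to prove completeness straight from the definition: every fuzzy Cauchy net in $(E^*,\sigma^{\Gamma}_f(E^*,E))$ must converge to a point of $E^*$. The whole argument rests on reducing both the Cauchy condition and the convergence statement to pointwise statements in $(\mathbb{K},\Gamma)$ through the evaluation maps, and then exploiting completeness of $\mathbb{K}$.

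First I would record the set-up. The pair $(E^*,E)$ is a dual pair: condition $(D)$ holds because a nonzero $f\in E^*$ is nonzero at some $x\in E$, and $(D')$ holds because for $x\neq 0$ in $E$ there is a linear form separating it; hence, by the earlier theorem, $E^*$ endowed with $\sigma^{\Gamma}_f(E^*,E)$ is a locally convex FTVS. For each $x\in E$ the evaluation map $\widehat{x}\colon E^*\to\mathbb{K}$, $\widehat{x}(f)=\langle f,x\rangle=f(x)$, is linear, and $\sigma^{\Gamma}_f(E^*,E)$ is by definition the weak fuzzy topology generated by the family $\{\widehat{x}:x\in E\}$; a subbase of fuzzy neighborhoods of zero is given by the sets $\widehat{x}^{-1}(\nu)$ with $\nu$ a fuzzy neighborhood of zero in $(\mathbb{K},\Gamma)$. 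Now let $(f_j)_{j\in T}$ be a fuzzy Cauchy net in $E^*$.

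The key step is to fix $x\in E$ and a fuzzy neighborhood $\nu$ of zero in $\mathbb{K}$ and apply the Cauchy hypothesis to the neighborhood $\mu=\widehat{x}^{-1}(\nu)$ of zero, noting that $\mu(0)=\nu(0)$. Since $\widehat{x}^{-1}(\nu)(f_j-f_{j'})=\nu\big(f_j(x)-f_{j'}(x)\big)$, the Cauchy condition for $\mu$ says precisely that for every $0<r<\nu(0)$ there is $j_0$ with $\nu\big(f_j(x)-f_{j'}(x)\big)>r$ whenever $j,j'>j_0$; that is, $(f_j(x))_j=(\widehat{x}(f_j))_j$ is a fuzzy Cauchy net in $(\mathbb{K},\Gamma)$ for each $x\in E$. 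Because $\mathbb{K}$ is finite dimensional and $\Gamma$ is Hausdorff, $(\mathbb{K},\Gamma)$ is complete, so $\lim_j f_j(x)$ exists in $\mathbb{K}$; denote it $f(x)$. It then remains to verify that $f$ is the required limit: linearity of $f$ follows by passing to the limit in $f_j(x+y)=f_j(x)+f_j(y)$ and $f_j(tx)=t\,f_j(x)$, using continuity of the operations in $\mathbb{K}$, so $f\in E^*$; and since $\widehat{x}(f_j)=f_j(x)\to f(x)=\widehat{x}(f)$ in $\mathbb{K}$ for every $x\in E$, Lemma~\ref{lkj} gives $f_j\xrightarrow{\sigma^{\Gamma}_f(E^*,E)}f$. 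As the Cauchy net was arbitrary, $E^*$ is complete.

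The step I expect to be the main obstacle is the reduction of the fuzzy Cauchy condition to genuine pointwise Cauchy-ness in $\mathbb{K}$ together with the appeal to completeness of $(\mathbb{K},\Gamma)$. The fuzzy Cauchy definition is phrased through the threshold inequalities $\mu(\cdot)>r$, so one must check carefully that these, read off from the sub-basic neighborhoods $\widehat{x}^{-1}(\nu)$, capture exactly the fuzzy Cauchy property in $\mathbb{K}$ and that this property forces convergence. Moreover, since $\Gamma$ is allowed to be an arbitrary locally convex Hausdorff fuzzy linear topology on $\mathbb{K}$ rather than the usual one, the completeness of $(\mathbb{K},\Gamma)$ should be justified from finite dimensionality rather than silently assumed.
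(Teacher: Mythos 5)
Your proof follows the same route as the paper's: reduce the fuzzy Cauchy condition to pointwise fuzzy Cauchy nets $(f_j(x))_j$ in $\mathbb{K}$ via the evaluation maps, use completeness of $\mathbb{K}$ to obtain a pointwise limit $f$, verify that $f$ is linear, and conclude $f_j\rightarrow f$ in $\sigma^{\Gamma}_f(E^*,E)$ from Lemma~\ref{lkj}. The one point where you diverge is that you flag the completeness of $(\mathbb{K},\Gamma)$ for a general $\Gamma$ as needing justification (and propose finite-dimensionality plus Hausdorffness, which is not automatic in the fuzzy setting given the paper's own remark that Hausdorff fuzzy linear topologies on finite-dimensional spaces are not unique), whereas the paper sidesteps this by simply invoking completeness of $\mathbb{K}$ under its usual fuzzy topology.
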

\begin{proof}
We consider $E^*$ endowed with the weak star fuzzy topology $\sigma^{\Gamma}_f(E^*,E)$. Suppose $(f_j)_{j\in T}$ is a fuzzy Cauchy net in $E^*$. Then for each $a\in E$ the net $(f_j(a))_{j\in T}$ is a fuzzy Cauchy net in $\mathbb{K}$. Since $ \mathbb{K} $ is complete under its usual fuzzy topology, for each $a\in E$ there is $f(a)\in\mathbb{K}$ such that $f_j(a)\rightarrow f(a)$. Since each $f_j$ is linear, then $f$ is a linear functional on $E$. Now Lemma \ref{lkj} shows that $ f_{j}\xrightarrow{\sigma^{\Gamma}_f(E^*,E)} f$. Then $E^*$ is complete under the fuzzy topology $\sigma^{\Gamma}_f(E^*,E)$.
\end{proof}
\begin{definition}
If $ (E,E')$ is a dual pair, any fuzzy topology on $E$ under which the dual of $ E $ is $ E' $ is called the {\it fuzzy topology of dual pair}.
\end{definition}
In a special case if $ (E,\tau) $ is a Hausdorff fuzzy topological vector space, then $ (E,E'_{f}) $ is a dual pair and $ \tau $ is a topology of dual pair.
Also,
 $\sigma^{\Gamma}_{f}(E,E')$ is the coarsest fuzzy topology of dual pair $(E,E')$.
\begin{theorem}\label{t159}
If $ (E,E') $ is a dual pair, then $ (E,\sigma^{\Gamma}_{f}(E,E')) $ is fuzzy Hausdorff.
\end{theorem}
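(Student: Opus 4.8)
The plan is to separate two distinct points of $E$ by pulling back a fuzzy Hausdorff separation from the scalar field, using the definition of fuzzy Hausdorff recalled in Section~2. First I would fix $x,y\in E$ with $x\neq y$, so that $x-y\neq 0$. Since $(E,E')$ is a dual pair, condition $(D)$ applied to $x-y$ produces some $x'\in E'$ with $\langle x-y,x'\rangle\neq 0$; by bilinearity this means $x'(x)\neq x'(y)$, i.e. the functional $x'$ separates $x$ and $y$ into two distinct points of $\mathbb{K}$.

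Next I would invoke the fuzzy Hausdorff property of $(\mathbb{K},\Gamma)$, which is part of the standing hypothesis that $\Gamma$ is a Hausdorff (locally convex) linear topology on $\mathbb{K}$. Applying the definition of fuzzy Hausdorff to the distinct scalars $x'(x)$ and $x'(y)$, there exist $\tilde\eta,\tilde\beta\in\Gamma$ with $\tilde\eta(x'(x))=\tilde\beta(x'(y))=1$ and $\tilde\eta\wedge\tilde\beta=0$. I would then pull these fuzzy sets back along $x'$ and set $\eta=(x')^{-1}(\tilde\eta)$ and $\beta=(x')^{-1}(\tilde\beta)$. Both are subbasic members of $\sigma^{\Gamma}_{f}(E,E')$ and hence open. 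By the definition of $f^{-1}$ we obtain $\eta(x)=\tilde\eta(x'(x))=1$ and $\beta(y)=\tilde\beta(x'(y))=1$, while for every $z\in E$,
$$(\eta\wedge\beta)(z)=\min\{\tilde\eta(x'(z)),\tilde\beta(x'(z))\}=(\tilde\eta\wedge\tilde\beta)(x'(z))=0,$$
so $\eta\wedge\beta=0$. This exhibits the separating pair required by the definition and establishes that $(E,\sigma^{\Gamma}_{f}(E,E'))$ is fuzzy Hausdorff.

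The only delicate point I anticipate is the transition in the second step: extracting a genuine fuzzy-topological separation in $\mathbb{K}$ from the hypothesis that $\Gamma$ is Hausdorff, i.e. confirming that a Hausdorff locally convex linear topology on $\mathbb{K}$ is fuzzy Hausdorff in the precise sense used here (existence of $\tilde\eta,\tilde\beta$ attaining the value $1$ at the respective points with $\tilde\eta\wedge\tilde\beta=0$). Once this separation in $\mathbb{K}$ is in hand, everything reduces to the formal pullback identities $f^{-1}(\mu)(z)=\mu(f(z))$ and the fact that $(x')^{-1}$ commutes with the pointwise $\wedge$, which are immediate. The role of the dual pair axiom $(D)$ is precisely to guarantee that at least one functional $x'$ distinguishes $x$ from $y$, without which no separation could be transported from $\mathbb{K}$ back to $E$.
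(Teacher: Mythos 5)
Your proposal is correct and follows essentially the same route as the paper's own proof: use axiom $(D)$ to obtain a functional $x'$ separating $x$ and $y$, invoke the fuzzy Hausdorff property of the scalar field to separate $x'(x)$ from $x'(y)$, and pull the separating fuzzy open sets back along $x'$ using the identity $f^{-1}(\mu)(z)=\mu(f(z))$. The delicate point you flag (that $\Gamma$ must be fuzzy Hausdorff in the precise sense of the definition) is also glossed over in the paper, which simply asserts the existence of the separating fuzzy open sets in $\mathbb{R}$.
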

\begin{proof}
Let $ x,y\in E $ and $ x\neq y $. Then by condition $ (D) $ there is $ x'\in E' $ such that $ x'(x-y)\neq0 $ i.e. $ x^{\prime}(x)\neq x^{\prime}(y) $. Since $ \mathbb{R} $ is Hausdorff there is fuzzy open sets $ \beta, \eta $ in $ \mathbb{R} $ such that $ \beta\wedge\eta=0 $ and $ \beta(x^{\prime}(x))=\eta(x^{\prime}(y))=1 $. Now we consider the fuzzy open sets $ x^{\prime^{-1}}(\eta), x^{\prime^{-1}}(\beta) $ in $ E $. These sets are the neighborhoods of $ x $ and $ y $ and we have
$$(x^{\prime^{-1}}(\eta)\wedge x^{\prime^{-1}}(\beta))(a)= min \lbrace \eta(x^{\prime}(a)), \beta(x^{\prime}(a))\rbrace=0$$
 and
$$ x^{\prime^{-1}}(\eta)(x)= \eta(x^{\prime}(x))=1 $$
similarly,
$$ x^{\prime^{-1}}(\beta)(y)= \beta(x^{\prime}(y))=1 .$$
Now the proof is complete.
\end{proof}
\begin{corollary}
Let $(E,E')$ be a dual pair and $\tau$ be a fuzzy dual pair topology on $E$. Then $E$ is Hausdorff endowed with $\tau$.
\end{corollary}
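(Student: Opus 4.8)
The plan is to deduce this corollary directly from Theorem \ref{t159}, together with the fact recorded just above the corollary that $\sigma^{\Gamma}_{f}(E,E')$ is the coarsest fuzzy topology of the dual pair $(E,E')$. The guiding principle is that the fuzzy Hausdorff property is inherited by any finer fuzzy topology: if two points can be separated by open fuzzy sets belonging to a coarse topology, then those very same fuzzy sets also belong to any finer topology and continue to separate the points there.

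First I would observe that, by hypothesis, $\tau$ is a fuzzy topology on $E$ whose fuzzy dual is $E'$; that is, $\tau$ is a fuzzy topology of the dual pair $(E,E')$ in the sense of the preceding definition. Since $\sigma^{\Gamma}_{f}(E,E')$ is the coarsest fuzzy topology of this dual pair, this immediately yields the inclusion $\sigma^{\Gamma}_{f}(E,E')\subseteq\tau$. I would then invoke Theorem \ref{t159}, which asserts that $(E,\sigma^{\Gamma}_{f}(E,E'))$ is fuzzy Hausdorff. Fixing $x,y\in E$ with $x\neq y$, that theorem provides $\eta,\beta\in\sigma^{\Gamma}_{f}(E,E')$ with $\eta(x)=\beta(y)=1$ and $\eta\wedge\beta=0$.

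Finally, because of the inclusion $\sigma^{\Gamma}_{f}(E,E')\subseteq\tau$, these same fuzzy sets $\eta$ and $\beta$ are open in $(E,\tau)$, and they still satisfy $\eta(x)=\beta(y)=1$ and $\eta\wedge\beta=0$. Hence $x$ and $y$ are separated by open fuzzy sets in $\tau$, so $(E,\tau)$ is fuzzy Hausdorff. The only genuine content here is the monotonicity of the Hausdorff property under refinement of the topology, which I expect to be the easy part; the main conceptual step is identifying the abstract dual pair topology $\tau$ as one lying above the coarsest such topology $\sigma^{\Gamma}_{f}(E,E')$, which is precisely where Theorem \ref{t159} can be applied.
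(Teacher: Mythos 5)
Your proposal is correct and follows exactly the paper's own argument: the paper likewise notes that $\tau$ is finer than $\sigma^{\Gamma}_{f}(E,E')$ (because the latter is the coarsest dual pair topology) and then invokes Theorem \ref{t159}. You merely make explicit the easy monotonicity step that the separating open fuzzy sets persist in any finer topology, which the paper leaves implicit.
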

\begin{proof}
Clearly, $\tau$ is finer than $\sigma^{\Gamma}_f(E,E')$. Now Theorem \ref{t159} shows that $E$ is Hausdorff endowed with $\tau$.
\end{proof}
\begin{proposition}\label{145}
If $ (E,\tau) $ is finite dimensional Hausdorff fuzzy topological vector space, then $ E^{\prime}_{\Gamma}=E^{\ast}$.
\end{proposition}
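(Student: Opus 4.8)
The statement asserts that on a finite dimensional Hausdorff FTVS every algebraic linear functional is automatically fuzzy continuous into $(\mathbb{K},\Gamma)$. The inclusion $E'_\Gamma\subseteq E^{\ast}$ is immediate, since a fuzzy continuous linear functional is in particular linear; the whole content is the reverse inclusion $E^{\ast}\subseteq E'_\Gamma$, so the plan is to prove that every $f\in E^{\ast}$ is fuzzy continuous.

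First I would fix a basis $\{e_1,\dots,e_n\}$ of $E$ and consider the canonical linear bijection $\phi\colon\mathbb{K}^n\to E$ given by $\phi(\lambda_1,\dots,\lambda_n)=\sum_{i=1}^n\lambda_i e_i$, together with the coordinate functionals $c_i=\pi_i\circ\phi^{-1}$, where $\pi_i$ is the $i$-th projection. Every $f\in E^{\ast}$ can be written $f=\sum_{i=1}^n f(e_i)\,c_i$, and since addition and scalar multiplication are fuzzy continuous on $E$ (so that fuzzy continuity of linear functionals is stable under finite linear combinations), it is enough to show that each $c_i$ is fuzzy continuous; equivalently, that $\phi$ is a fuzzy homeomorphism. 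That $\phi$ itself is fuzzy continuous is the easy half: $\phi$ is built from finitely many applications of the addition and scalar multiplication maps, so its continuity follows from continuity of those operations, checked coordinatewise via the net criterion of Lemma \ref{lkj}.

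The hard part, and the only place where both hypotheses are used, is the fuzzy continuity of $\phi^{-1}$. Here I would adapt the classical theorem that a finite dimensional Hausdorff topological vector space is linearly homeomorphic to $\mathbb{K}^n$, arguing by induction on $n=\dim E$ and using fuzzy Hausdorffness (in the spirit of Theorem \ref{t159}) to separate a suitable image of the unit sphere of $\mathbb{K}^n$ from the origin by a balanced fuzzy neighborhood of zero, whence $\phi^{-1}$ is continuous at $0$ and, by linearity together with continuity of the operations, everywhere. I expect the main obstacle to be twofold. First, one must produce a genuine fuzzy analog of the compactness-and-separation step, which requires careful handling of fuzzy compactness and of separating a balanced fuzzy closed set from zero by a neighborhood. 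Second, one must match the target topology, since the operations of an FTVS are continuous with respect to the usual fuzzy topology of $\mathbb{K}$ rather than the prescribed $\Gamma$; for this I would either verify directly that the coordinate functionals land continuously in $(\mathbb{K},\Gamma)$, or bypass the issue by showing that on a finite dimensional Hausdorff space $\tau$ coincides with $\sigma^{\Gamma}_f(E,E^{\ast})$ and then invoking the earlier proposition that the fuzzy dual of $E$ under $\sigma^{\Gamma}_f(E,E^{\ast})$ equals $E^{\ast}$.
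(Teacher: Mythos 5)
Your reduction to the fuzzy continuity of the coordinate functionals $c_i$ is the standard classical plan, but the decisive step --- continuity of $\phi^{-1}$, i.e.\ of each $c_i\colon(E,\tau)\to(\mathbb{K},\Gamma)$ --- is only announced (``adapt the classical theorem\dots by induction\dots separate the unit sphere''), not carried out, and this is exactly where the argument breaks down. The classical proof rests on the uniqueness of the Hausdorff linear topology in finite dimensions, and the Remark immediately following this proposition in the paper points out that this uniqueness \emph{fails} in the fuzzy setting: on $\mathbb{R}$ the topology $\tau_1$ of all lower semi-continuous fuzzy sets and the strictly coarser topology $\tau_2$ of characteristic functions of open sets are both Hausdorff fuzzy linear topologies. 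Hence $\phi$ cannot in general be a fuzzy homeomorphism onto a fixed model $(\mathbb{K}^n,\Gamma^n)$, and the compactness-and-separation induction you invoke has no direct fuzzy counterpart; concretely, for $(E,\tau)=(\mathbb{R},\tau_2)$ and $\Gamma=\tau_1$ the identity functional $c_1$ already fails to be fuzzy continuous, so your key step cannot be derived from the stated hypotheses by this route. Your proposed fallback --- first showing $\tau=\sigma^{\Gamma}_f(E,E^{\ast})$ and then quoting the earlier duality proposition --- is circular, since that identification already presupposes that the elements of $E^{\ast}$ are $\tau$-continuous.

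The paper argues quite differently and never proves continuity of any individual functional. It notes that $E\simeq E^{\ast}$ algebraically in finite dimensions, asserts that $(E,E'_\Gamma)$ is a dual pair (this is where Hausdorffness is meant to enter, via separation of points by continuous functionals), and then sandwiches $E\subseteq(E'_\Gamma)^{\ast}$ and $E'_\Gamma\subseteq E^{\ast}$ to conclude $E'_\Gamma=E^{\ast}$ by a dimension count. To repair your argument you would need either a genuine fuzzy substitute for the separation-by-compactness step (which the non-uniqueness phenomenon above obstructs), or else to prove directly that $E'_\Gamma$ separates points of $E$ and then argue by dimension as the paper does.
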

\begin{proof}
Let $B=\{e_1,e_2,\cdots,e_n\}$ be a base for $E$. Then there is a base $B'=\{e_1^*,e_2^*,\cdots,e_n^*\}$ for $E^*$ such that $e_i^*(e_i)=1$ for each $i=1,2,\cdots,n$ and $e_i^*(e_j)=0$ for $i\neq j$. Then $E$ and $E^*$ are algebraical isomorphic. Since $(E, E'_\Gamma)$ is a dual pair, we have $E\subseteq (E'_\Gamma)^*\simeq E'_\Gamma$ and $E'_\Gamma \subseteq E^*\simeq E$. Then $E'_\Gamma=E^*$.
\end{proof}
\begin{remark}
Let $E$ be finite dimensional vector space. We know that there is only a unique Hausdorff linear topology on $E$. But this is not true in the fuzzy case i.e. the Hausdorff fuzzy linear topology on $E$ is not unique. For example consider the vector space $\mathbb{R}$. The collection $\tau_1$ of all lower semi-continuous fuzzy sets is a Hausdorff fuzzy linear topology. Also, the collection $\tau_2$ of a all characteristic functions of open subsets of $ \mathbb{R} $ is a  Hausdorff fuzzy linear topology. But these two topologies are not identical and we have $\tau_2\subset\tau_1$.
\end{remark}
\begin{corollary}
Let $(E,\tau)$ be a finite dimensional Hausdorff fuzzy topological vector space. Then we have $\sigma^{\Gamma}_f(E,E'_\Gamma)=\sigma^{\Gamma}_f(E,E^*)$. In fact this is concluded form Proposition \ref{145}.
\end{corollary}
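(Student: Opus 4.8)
The plan is to obtain this equality as a purely formal consequence of Proposition \ref{145}, using only the fact that the weak fuzzy topology $\sigma^{\Gamma}_f(E,F)$ depends on the generating family $F$ of linear functionals solely through $F$ as a set (the fixed topology $\Gamma$ on $\mathbb{K}$ being the same in both cases). Recall from the construction of the weak fuzzy topology that $\sigma^{\Gamma}_f(E,F)$ is generated by the subbase $\{\, x'^{-1}(\mu) : x'\in F,\ \mu\in\Gamma \,\}$, equivalently it is the coarsest fuzzy topology on $E$ making every $x'\in F$ fuzzy continuous into $(\mathbb{K},\Gamma)$. Thus two families that coincide as sets of functionals produce identical subbases and hence identical weak fuzzy topologies.

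First I would invoke Proposition \ref{145}: since $(E,\tau)$ is a finite dimensional Hausdorff fuzzy topological vector space, its $\Gamma$-fuzzy dual satisfies $E'_\Gamma = E^*$. It is convenient to view this as two inclusions. The inclusion $E'_\Gamma \subseteq E^*$ is automatic, since every fuzzy continuous linear functional is in particular an algebraic linear form; this already yields $\sigma^{\Gamma}_f(E,E'_\Gamma)\subseteq \sigma^{\Gamma}_f(E,E^*)$ by monotonicity of the weak fuzzy topology in its generating family (enlarging the family can only refine the subbase). The substantive content of Proposition \ref{145} is the reverse inclusion $E^*\subseteq E'_\Gamma$, which gives $\sigma^{\Gamma}_f(E,E^*)\subseteq \sigma^{\Gamma}_f(E,E'_\Gamma)$ in the same way.

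Combining the two inclusions, or more directly substituting the set equality $E'_\Gamma=E^*$ into the defining subbases, one concludes
$$ \sigma^{\Gamma}_f(E,E'_\Gamma)=\sigma^{\Gamma}_f(E,E^*). $$
I expect no genuine obstacle here: all the real work is already carried out in Proposition \ref{145}, where the algebraic isomorphism between $E$ and $E^*$ is used to identify the fuzzy dual with the full algebraic dual. Once that identification is granted, the passage to the associated weak fuzzy topologies is formal, because the assignment $F\mapsto\sigma^{\Gamma}_f(E,F)$ is determined by $F$ together with the ambient topology $\Gamma$ on $\mathbb{K}$, both of which are unchanged.
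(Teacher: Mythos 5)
Your proposal is correct and follows exactly the paper's intended route: the corollary is obtained by substituting the set equality $E'_\Gamma=E^*$ from Proposition \ref{145} into the defining subbase of the weak fuzzy topology, which is all the paper itself does. The extra remarks on monotonicity of $F\mapsto\sigma^{\Gamma}_f(E,F)$ are a harmless elaboration of the same argument.
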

It is well known that between all the linear topologies on a finite dimensional
vector space $E$ there is a strongest one, namely the unique Hausdorff linear
topology on $E$. We denote this topology by $\tau$. Let $\omega(\tau)$ be the collection of all lower semi-continuous from $E$ into $I$. Then $\omega(\tau)$ is a fuzzy linear topology on $E$. We denote the usual weak topology on $E$ by $\sigma(E,E^*)$.
\begin{theorem}\label{toN}
Let $(E,E')$ be a dual pair. If we consider the usual topology $ \omega(\tau) $ on $\mathbb{R}$, then
 $$\sigma^{\omega(\tau)}_f(E,E')=\omega(\sigma(E,E')).$$
\end{theorem}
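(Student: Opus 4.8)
The plan is to prove the two inclusions separately, using the standard fact that passing to lower semi-continuous $I$-valued functions turns an ordinary continuous map into a fuzzy continuous one. Throughout I write $w=\sigma(E,E')$ for the ordinary weak topology on $E$, whose usual subbase is $\{x'^{-1}(O): x'\in E',\ O\subseteq\mathbb{R}\text{ open}\}$, and I recall (as already noted in the excerpt) that both $\omega(w)$ and $\omega(\tau)$ are genuine fuzzy topologies, being closed under finite infima and arbitrary suprema and containing the constants. For a subset $A\subseteq E$ and $r\in[0,1)$ I denote by $r\chi_A$ the fuzzy set equal to $r$ on $A$ and to $0$ elsewhere.

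For the inclusion $\sigma^{\omega(\tau)}_f(E,E')\subseteq\omega(\sigma(E,E'))$, I would show that $\omega(w)$ already makes every $x'$ fuzzy continuous, so that the coarsest such fuzzy topology sits inside it. Each $x':E\to\mathbb{R}$ is $w$-continuous by the very definition of the weak topology; hence for any lower semi-continuous $\mu:\mathbb{R}\to I$ the composite $x'^{-1}(\mu)=\mu\circ x'$ is $w$-lower semi-continuous, i.e. lies in $\omega(w)$. Since $\sigma^{\omega(\tau)}_f(E,E')$ is by definition the coarsest fuzzy topology rendering all the $x'$ fuzzy continuous, and $\omega(w)$ is one such topology, the inclusion follows.

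The reverse inclusion is the substantive part. Given a $w$-lower semi-continuous $\nu:E\to I$, I would decompose it into level sets: for $r\in[0,1)$ set $U_r=\{x:\nu(x)>r\}$, which is $w$-open, and observe the pointwise identity $\nu=\sup_{r\in[0,1)} r\chi_{U_r}$. Each $U_r$ is a union of basic weak-open sets $B=\bigcap_{k=1}^n x_k'^{-1}(O_k)$, so $r\chi_{U_r}=\sup_{B\subseteq U_r} r\chi_B$, and in turn $r\chi_B=\bigwedge_{k=1}^n x_k'^{-1}(r\chi_{O_k})$, using that the scaled indicator $r\chi_{O}$ of an open $O\subseteq\mathbb{R}$ is lower semi-continuous (hence belongs to $\omega(\tau)$) and that $x'^{-1}(r\chi_{O})=r\chi_{x'^{-1}(O)}$. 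Assembling these, $\nu$ is a supremum of finite infima of subbasic fuzzy sets of the form $x_k'^{-1}(r\chi_{O_k})$, and therefore $\nu\in\sigma^{\omega(\tau)}_f(E,E')$ by closure of a fuzzy topology under finite meets and arbitrary joins. Together the two inclusions give the asserted equality.

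I expect the main obstacle to be the bookkeeping in this reverse inclusion: checking that the level-set supremum genuinely reconstructs $\nu$, that $r\chi_{O}$ is lower semi-continuous on $\mathbb{R}$ so that it is a legitimate element of $\omega(\tau)$, and that preimage commutes with these scaled indicators so that the infimum over $k$ really yields $r\chi_B$. None of these is deep, but they must be verified carefully. It is precisely here that the hypothesis matters: the lower semi-continuity of the indicators $r\chi_{O}$ on $\mathbb{R}$ is what lets the weak fuzzy topology recover every $w$-lower semi-continuous fuzzy set, and this is exactly the property that fails when $\mathbb{R}$ carries a fuzzy topology different from its usual one, foreshadowing the non-equivalence announced in the abstract.
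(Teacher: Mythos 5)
Your proof is correct, and it actually does more than the paper's own argument. For the inclusion $\sigma^{\omega(\tau)}_f(E,E')\subseteq\omega(\sigma(E,E'))$ your reasoning coincides with the paper's: the paper checks that each basic set $\bigwedge_{i=1}^{n}f_i^{-1}(\mu_i)$ is lower semi-continuous for $\sigma(E,E')$ because each $\mu_i\circ f_i$ is a lower semi-continuous function composed with a weakly continuous linear map, which is exactly your observation that $\omega(\sigma(E,E'))$ is a fuzzy topology making every $x'$ fuzzy continuous, so the coarsest such topology lies inside it. The paper, however, stops there, asserting that this one containment ``is enough''; it never addresses the reverse inclusion $\omega(\sigma(E,E'))\subseteq\sigma^{\omega(\tau)}_f(E,E')$, which is the substantive half of the equality. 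Your level-set decomposition $\nu=\sup_{r\in[0,1)}r\chi_{U_r}$ with $U_r=\{x:\nu(x)>r\}$, followed by writing each $w$-open $U_r$ as a union of basic weak-open sets $B=\bigcap_{k=1}^{n}x_k'^{-1}(O_k)$ and $r\chi_B$ as the finite meet $\bigwedge_{k=1}^{n}x_k'^{-1}(r\chi_{O_k})$ of preimages of the lower semi-continuous scaled indicators, supplies precisely the missing argument; the three verifications you flag (the pointwise identity $\nu=\sup_r r\chi_{U_r}$, the lower semi-continuity of $r\chi_O$ for open $O\subseteq\mathbb{R}$, and the identity $x'^{-1}(r\chi_O)=r\chi_{x'^{-1}(O)}$) all go through routinely. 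In short, where the two arguments overlap you follow the same route as the paper, but yours is the complete proof of the stated equality, while the paper's, as written, establishes only one inclusion.
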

\begin{proof}
It is enough to show that for $ f_{1}, f_{2},\cdots, f_{n}\in E' $ and $ \mu_{1}, \mu_{2},\cdots , \mu_{n}\in \omega(\tau) $ the fuzzy set $ \bigwedge^{n}_{i=1}f^{-1}_{i}(\mu_{i}) $ is lower semi-continuous on $ E $ with respect to the topology $ \sigma(E,E'). $ Since $ \mu_{i}\in \omega(\tau) $ for $ i=1,2,\cdots, n $, then $ \mu_{i}: \mathbb{K}\rightarrow I $ are lower semi-continuous. Also $$ f_{i}: (E,\sigma(E,E'))\rightarrow \mathbb{K} $$ is continuous and then is lower semi-continuous. Then $\mu_{i} \circ f_{i} $ is lower semi-continuous for each $ i=1,2,\cdots,n. $ Then $$ \bigwedge^{n}_{i=1}f^{-1}_{i}(\mu_{i})= \bigwedge^{n}_{i=1}\mu_{i} \circ f_{i} $$ is lower semi-continuous on $ (E,\sigma(E,E')) $ for each $ i=1,2,\cdots,n. $
\end{proof}
If we consider on $\mathbb{K}$ any fuzzy Hausdorff linear topology different from the topology $ \omega(\tau), $ then the topologies $ \sigma^{\omega(\tau)}_f(E,E') $ and $ \omega(\sigma(E,E')) $ are not equivalent. We illustrate this fact by an example.
\begin{example}\label{tor}
Consider the vector space $ \mathbf{C}(\Omega) $, the collection of all real-valued continuous functions on the open set $ \Omega \subseteq \mathbb{R}^{n}.$ For each $ x\in \Omega $ we set $ \delta_{x}:\mathbf{C}(\Omega)\rightarrow \mathbb{R}: \delta_{x}(f)=f(x). $ Then $ \delta_{x} $ is a linear functional on $ \mathbf{C}(\Omega) $. Let $ V=span \lbrace \delta_{x}: x\in \Omega \rbrace $. Then $ V $ is a real vector space and $ (\mathbf{C}(\Omega),V) $ is a dual pair. On $ \mathbb{R} $ the fuzzy norms
$$\Vert x \Vert^{\ast}(t)=\begin{cases}1-\frac{t}{|x|} &0 \leq t \leq |x|, x\neq0, \\0 & else,\end{cases}$$ and $$\Vert x\Vert (t)=\begin{cases}1 &t=|x|,\\0 & else,\end{cases}$$ are not equivalent (See \cite{g}, Remark 3.1). The fuzzy norm $ \Vert.\Vert $ induces the fuzzy topology $ \omega(\tau) $ which is the finest fuzzy topology on $ \mathbb{R} $ (See \cite{k7}, Theorem 3.18). The fuzzy norm $ \Vert.\Vert^{\ast} $ induces the fuzzy topology $ \mathcal{T}^{\ast}_{\Vert.\Vert} $ on $ \mathbb{R} $  which is weaker than $ \omega(\tau). $ Then there is $ \mu\in \omega(\tau) $ such that $ \mu\notin \mathcal{T}^{\ast}_{\Vert.\Vert}. $ Now by Theorem \ref{toN}, we have $$ \bigwedge^{n}_{i=1}\delta^{-1}_{x_{i}}(\mu)\in \omega(\sigma(\mathbf{C}(\Omega),V)) $$ for $ x_{1}, x_{2},\cdots, x_{n}\in \Omega $. But if we consider $ \mathcal{T}^*_{\Vert.\Vert} $ on $ \mathbb{R} $, then we have $$ \bigwedge^{n}_{i=1}\delta^{-1}_{x_{i}}(\mu)\notin \sigma^{\mathcal{T}^{\ast}_{\Vert.\Vert}}_{f}(\mathbf{C}(\Omega),V) $$  for $ x_{1}, x_{2},\cdots, x_{n}\in \Omega $, since  $ \mu\notin \mathcal{T}^{\ast}_{\Vert.\Vert}. $ This show that $ \sigma^{\mathcal{T}^{\ast}_{\Vert.\Vert}}_{f}(\mathbf{C}(\Omega),V) $ is strictly weaker than $ \omega(\sigma(\mathbf{C}(\Omega),V)). $
\end{example}
Theorem \ref{toN} and Example \ref{tor} show that the weak fuzzy topology is an extension of weak topology.\\
Let $T : E \rightarrow F$ be a linear operator between two vector spaces.
 Every $y^*\in F^*$ gives rise to
a real function $T^*y^*$ on $E$ defined pointwise via the formula
$$T^*y^*(x)= y^*\circ T(x)=y^*(T(x)),$$ for $x\in E$.
Clearly $T^*y^*$ is linear and so belongs to $X^*$
The operator $T^*$ is called the {\it algebraic adjoint} of $T$.
\begin{definition}
Let $(E,\tau)$ and $(F,\xi)$ be fuzzy topological vector spaces. The linear operator is called weakly fuzzy continuous whenever  $$T:(E,\sigma^{\Gamma}_f(E,E'_\Gamma)\rightarrow (F,\sigma^{\Gamma}_f(F,F'_\Gamma)),$$ is fuzzy continuous.
\end{definition}
\begin{lemma}
Let $(E,\tau)$ be a fuzzy topological vector space. Then for a net $(x_i)_{i\in S}$ we have $x_i\xrightarrow{\sigma^{\Gamma}_f(E,E')}x$ if and only if $x_i-x\xrightarrow{\sigma^{\Gamma}_f(E,E')}0$.
\end{lemma}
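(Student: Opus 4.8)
The plan is to reduce the statement to convergence in the scalar field $\mathbb{K}$ by means of Lemma \ref{lkj}, and then to invoke the fuzzy topological vector space structure of $\mathbb{K}$ (equivalently, of $E$ itself under the weak fuzzy topology), whose translations are fuzzy homeomorphisms.

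First I would apply Lemma \ref{lkj} to each side of the claimed equivalence separately. On the one hand, $x_i \xrightarrow{\sigma^{\Gamma}_f(E,E')} x$ holds if and only if $f(x_i) \to f(x)$ in $(\mathbb{K},\Gamma)$ for every $f \in E'$. On the other hand, $x_i - x \xrightarrow{\sigma^{\Gamma}_f(E,E')} 0$ holds if and only if $f(x_i - x) \to f(0) = 0$ in $(\mathbb{K},\Gamma)$ for every $f \in E'$. Since every $f \in E'$ is linear, we have $f(x_i - x) = f(x_i) - f(x)$, so the second condition reads $f(x_i) - f(x) \to 0$ in $\mathbb{K}$ for every $f \in E'$. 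Thus both sides of the lemma have been translated into parallel conditions on the scalar nets $(f(x_i))$.

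It therefore remains to show, for a fixed scalar $c = f(x)$ and the net $(c_i) = (f(x_i))$ in $\mathbb{K}$, that $c_i \to c$ if and only if $c_i - c \to 0$. This is precisely translation invariance of fuzzy convergence in the fuzzy topological vector space $(\mathbb{K},\Gamma)$: the map $y \mapsto y - c$ is the composition of the continuous addition with the fixed element $-c$, hence is fuzzy continuous, and its inverse $y \mapsto y + c$ is fuzzy continuous for the same reason, so it is a fuzzy homeomorphism of $\mathbb{K}$ carrying the net $(c_i)$ with limit $c$ onto $(c_i - c)$ with limit $0$, and conversely. Since this equivalence holds for every $f \in E'$ simultaneously, the two reformulations from the previous step coincide, which is exactly the stated equivalence for the net in $E$.

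The main obstacle will be the verification of translation invariance at the level of fuzzy convergence, since that notion is phrased in terms of fuzzy neighborhoods and thresholds $0 < \varepsilon < \mu(c)$ rather than ordinary open sets. Concretely, one must check that if $\mu$ is a fuzzy neighborhood of $c$ then the translate $y \mapsto \mu(y + c)$ is a fuzzy neighborhood of $0$ taking the same value at $0$ that $\mu$ takes at $c$, and that the defining threshold inequality transports correctly under this translation; this is the only place where the FTVS axioms for $\mathbb{K}$ (continuity of addition and scalar multiplication) are genuinely used. Once this routine transport is in place, the reduction via Lemma \ref{lkj} makes everything else immediate. Alternatively, one may bypass the scalar field and argue directly in $E$, using that $(E,\sigma^{\Gamma}_f(E,E'))$ is a fuzzy topological vector space so that translation by $-x$ is a fuzzy homeomorphism sending $x_i \to x$ to $x_i - x \to 0$; the two routes are of comparable difficulty.
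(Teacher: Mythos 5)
Your proposal is correct and follows essentially the same route as the paper: both reduce the statement to scalar nets via Lemma \ref{lkj} and linearity of the functionals, and then use translation invariance of fuzzy neighborhoods in $\mathbb{K}$ (the paper phrases this as $(T(x)+\mu)(T(x_i))=\mu(T(x_i)-T(x))$ for a neighborhood $\mu$ of zero). The ``routine transport'' you flag as the main obstacle is exactly the one-line identity the paper uses, so no genuinely new idea is needed.
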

\begin{proof}
Let $x_i\xrightarrow{\sigma^{\Gamma}_f(E,E')}x$. Then for each $T\in E'$, we have $$T(x_i)\xrightarrow{\sigma^{\Gamma}_f(E,E')}T(x).$$ Then for each neighborhood $\mu$ of zero and in $\mathbb{K}$ each $0<r<\mu(0)$ there is $i_0\in S$ such that for every $i>i_0$, $(T(x)+\mu)(T(x_i))>r$. This shows that $\mu( T(x_i)-T(x))>r$. Since $T$ is linear, we have $\mu( T(x_i-x))>r$. It follows that $x_i-x\xrightarrow{\sigma^{\Gamma}_f(E,E')}0$. The converse is similar.
\end{proof}
The next result offers a very simple criterion for deciding whether a linear
operator is weakly continuous. You only have to check that its adjoint carries
continuous functionals into continuous functionals.
\begin{theorem}\label{thgm1}
Let $(E,E')$ and $(F,F')$ be
dual pairs and let $T : E \rightarrow F$ be a linear
operator, where $E$ and $F$ are endowed with their weak fuzzy topologies. Then $T$ is
weakly fuzzy continuous if and only if the algebraic adjoint $T^*$ satisfies $T^*(F_\Gamma') \subseteq E_\Gamma'$.
\end{theorem}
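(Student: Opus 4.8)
The plan is to prove both implications using the subbasis description of the weak fuzzy topologies together with the single algebraic identity
$$T^{-1}(g^{-1}(\nu))=(g\circ T)^{-1}(\nu)=(T^{*}g)^{-1}(\nu),$$
valid for every $g\in F'_{\Gamma}$ and every $\nu\in\Gamma$, which follows at once from $f^{-1}(\eta)(x)=\eta(f(x))$ and the definition $T^{*}g=g\circ T$ of the algebraic adjoint. The two structural facts I will lean on are: first, that the weak fuzzy topology $\sigma^{\Gamma}_{f}(E,E'_{\Gamma})$ has the subbasis $\{(x')^{-1}(\nu):x'\in E'_{\Gamma},\ \nu\in\Gamma\}$ and that $T^{-1}$ commutes with the lattice operations $\sup$ and $\wedge$ (again because $T^{-1}(\eta)=\eta\circ T$); and second, the earlier proposition asserting that the fuzzy dual of $E$ under $\sigma^{\Gamma}_{f}(E,E'_{\Gamma})$ is exactly $E'_{\Gamma}$, and likewise for $F$.

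For the direction $(\Leftarrow)$, I would assume $T^{*}(F'_{\Gamma})\subseteq E'_{\Gamma}$. To prove $T$ fuzzy continuous it suffices, by the subbasis description and the fact that $T^{-1}$ preserves arbitrary $\sup$ and finite $\wedge$, to check that the preimage of each subbasic open fuzzy set of $F$ is open in $E$. Taking such a set $g^{-1}(\nu)$ with $g\in F'_{\Gamma}$ and $\nu\in\Gamma$, the identity above gives $T^{-1}(g^{-1}(\nu))=(T^{*}g)^{-1}(\nu)$, and since $T^{*}g\in E'_{\Gamma}$ by hypothesis, this is a subbasic open fuzzy set for $\sigma^{\Gamma}_{f}(E,E'_{\Gamma})$, hence open. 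Therefore $T$ is weakly fuzzy continuous. Alternatively one can argue through nets using Lemma \ref{lkj}: if $x_{i}\xrightarrow{\sigma^{\Gamma}_{f}(E,E'_{\Gamma})}x$, then $(T^{*}g)(x_{i})\to(T^{*}g)(x)$ for every $g\in F'_{\Gamma}$, that is $g(T(x_{i}))\to g(T(x))$, so $T(x_{i})\to T(x)$ in $F$.

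For the direction $(\Rightarrow)$, I would assume $T$ is weakly fuzzy continuous and fix $g\in F'_{\Gamma}$. By construction of the weak fuzzy topology, $g:(F,\sigma^{\Gamma}_{f}(F,F'_{\Gamma}))\to(\mathbb{K},\Gamma)$ is fuzzy continuous, so the composition $g\circ T=T^{*}g$ is a fuzzy continuous map $(E,\sigma^{\Gamma}_{f}(E,E'_{\Gamma}))\to(\mathbb{K},\Gamma)$; it is moreover linear, being a composite of linear maps. Thus $T^{*}g$ is a fuzzy continuous linear functional on $E$ for the topology $\sigma^{\Gamma}_{f}(E,E'_{\Gamma})$, and by the proposition identifying the fuzzy dual of $E$ under this topology with $E'_{\Gamma}$ we conclude $T^{*}g\in E'_{\Gamma}$. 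Since $g\in F'_{\Gamma}$ was arbitrary, $T^{*}(F'_{\Gamma})\subseteq E'_{\Gamma}$.

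The routine parts are the preimage identity and the lattice-preservation of $T^{-1}$; the genuine content sits in the $(\Rightarrow)$ direction, where the step I expect to be the main obstacle is invoking the duality proposition to force the continuous linear functional $T^{*}g$ back into $E'_{\Gamma}$. That proposition is precisely what rules out \emph{extra} functionals becoming continuous and is the fuzzy analogue of the classical fact that the dual of a weak topology is the prescribed space; since its proof relies (through Lemma \ref{l478} and the condition $\mu(0)>\mu(1)$) on $(\mathbb{K},\Gamma)$ being locally convex Hausdorff, care is needed to ensure those hypotheses are genuinely in force for the pairs at hand.
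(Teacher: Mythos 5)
Your proof is correct, and the forward direction is exactly the paper's argument: $g\circ T=T^{*}g$ is a weakly fuzzy continuous linear functional on $E$, hence lies in $E'_{\Gamma}$. You are in fact more careful than the paper here, since you explicitly invoke the proposition identifying the fuzzy dual of $(E,\sigma^{\Gamma}_{f}(E,E'_{\Gamma}))$ with $E'_{\Gamma}$ — the paper uses this silently in the step ``this shows that $y'\circ T\in E'_{\Gamma}$'' — and you correctly flag that this is where the real content (and the hypotheses on $(\mathbb{K},\Gamma)$, via Lemma \ref{l478}) enters. For the converse the paper argues through nets and Lemma \ref{lkj}: from $x_{i}\to x$ weakly it deduces $T^{*}(y')(x_{i})\to T^{*}(y')(x)$ for all $y'\in F'_{\Gamma}$ and hence $T(x_{i})\to T(x)$ weakly; this is precisely the alternative you sketch. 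Your primary route via the preimage identity $T^{-1}(g^{-1}(\nu))=(T^{*}g)^{-1}(\nu)$ on subbasic sets is a genuine (if minor) variant: it is more direct, needs only that $\eta\mapsto\eta\circ T$ preserves suprema and finite infima, and avoids leaning on the net characterization of fuzzy continuity, which the paper never formally establishes for general fuzzy topological spaces. Either version is acceptable; nothing is missing.
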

\begin{proof}
Firstly, suppose that $T$ is weakly fuzzy continuous. Since each $y'\in F'_\Gamma$ is weakly fuzzy continuous, then $y'\circ T$ is weakly fuzzy continuous linear operator on $E$. This shows that $y'\circ T\in E'_\Gamma$. Since $T^*(y')=y'\circ T$ , then $T^*(y')\in E'_\Gamma$.

Conversely, let  $T^*(F_\Gamma') \subseteq E_\Gamma'$ and $(x_i)_{i\in S}$ be a net in $E$ such that $x_i\xrightarrow{\sigma_f(E,E'_\Gamma)} x$. This shows that for each $f\in E'_\Gamma$, $f(x_i)\rightarrow f(x)$. Now since $T^*(F_\Gamma') \subseteq E_\Gamma'$, then for each $y'\in F'_\Gamma$, we have $T^*(y')(x_i)\rightarrow T^*(y')(x_i)$. Therefore  $y'(T(x_i))\rightarrow y'(T(x))$ for each $y'\in F'_\Gamma$. This shows that $T(x_i)\xrightarrow{\sigma_f(F,F'_\Gamma)} T(x)$. Therefore $T$ is weakly fuzzy continuous.
\end{proof}
\begin{theorem}\label{km123}
Let $E$ and $F$ be Hausdorff locally convex fuzzy topological vector spaces and $T:E\rightarrow F$ be a fuzzy continuous linear operator. Then $T$ is weakly continuous.
\end{theorem}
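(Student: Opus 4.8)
The plan is to reduce the statement to the adjoint criterion already established in Theorem \ref{thgm1}. Since $E$ and $F$ are Hausdorff locally convex fuzzy topological vector spaces, the pairs $(E,E'_\Gamma)$ and $(F,F'_\Gamma)$ are dual pairs, so Theorem \ref{thgm1} applies and tells us that $T$ is weakly fuzzy continuous if and only if its algebraic adjoint satisfies $T^*(F'_\Gamma)\subseteq E'_\Gamma$. Thus it suffices to verify this single inclusion, using only the hypothesis that $T:(E,\tau)\to(F,\xi)$ is fuzzy continuous.

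First I would fix an arbitrary $y'\in F'_\Gamma$, i.e. a linear functional $y':(F,\xi)\to(\mathbb{K},\Gamma)$ that is fuzzy continuous, and recall that by definition $T^*(y')=y'\circ T$. The core step is to show that $y'\circ T$ again belongs to $E'_\Gamma$, that is, that it is a fuzzy continuous linear functional on $(E,\tau)$. Linearity is immediate since both $T$ and $y'$ are linear. For fuzzy continuity I would invoke the standard fact that a composition of fuzzy continuous maps is fuzzy continuous: for any open fuzzy set $\mu$ in $\mathbb{K}$ one has, for all $x\in E$, the identity $(y'\circ T)^{-1}(\mu)(x)=\mu(y'(T(x)))=(y')^{-1}(\mu)(T(x))=T^{-1}\big((y')^{-1}(\mu)\big)(x)$, so that $(y'\circ T)^{-1}(\mu)=T^{-1}\big((y')^{-1}(\mu)\big)$. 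Since $y'$ is fuzzy continuous, $(y')^{-1}(\mu)$ is open in $(F,\xi)$, and since $T$ is fuzzy continuous its preimage is open in $(E,\tau)$; hence $(y'\circ T)^{-1}(\mu)$ is open, which proves $y'\circ T\in E'_\Gamma$.

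Having shown $T^*(y')=y'\circ T\in E'_\Gamma$ for every $y'\in F'_\Gamma$, we obtain $T^*(F'_\Gamma)\subseteq E'_\Gamma$, and Theorem \ref{thgm1} then yields at once that $T$ is weakly fuzzy continuous, completing the argument. I do not expect a genuine obstacle here: the whole content is the passage from continuity in the original topologies to the adjoint inclusion, and the only point deserving care is the verification that composition preserves fuzzy continuity, equivalently the preimage identity $(y'\circ T)^{-1}=T^{-1}\circ(y')^{-1}$ on fuzzy sets. Once that is in hand, the theorem is a direct corollary of the criterion in Theorem \ref{thgm1}.
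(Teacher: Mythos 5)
Your proposal is correct and follows essentially the same route as the paper: both show that for each $y'\in F'_\Gamma$ the composition $T^*(y')=y'\circ T$ is a fuzzy continuous linear functional on $E$, conclude $T^*(F'_\Gamma)\subseteq E'_\Gamma$, and then invoke Theorem \ref{thgm1}. The only difference is that you spell out the preimage identity $(y'\circ T)^{-1}(\mu)=T^{-1}\bigl((y')^{-1}(\mu)\bigr)$ justifying that compositions of fuzzy continuous maps are fuzzy continuous, which the paper takes for granted.
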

\begin{proof}
Suppose $T:E\rightarrow F$ is fuzzy continuous. Then for each $y'\in F'_\Gamma$, $T^*(y')=y'of$ is a fuzzy continuous linear operator on $E$ and then $T^*(y')\in E'_\Gamma$. Therefore  $T^*(F'_\Gamma)\subseteq E'_\Gamma$. Now, Theorem \ref{thgm1} shows that $T$ is weakly continuous.
\end{proof}
Theorem \ref{km123} shows that every fuzzy continuous linear operator is weakly continuous. In the following theorem, we consider some conditions under which every weakly fuzzy continuous linear operator in continuous.
\begin{theorem}
Let $E$ be a bornological fuzzy topological vector space and $F$ be a fuzzy topological vector space such that every weakly bounded fuzzy sets in $F$ is bounded. The every weakly continuous linear operator $T:E\rightarrow F$ is continuous.
\end{theorem}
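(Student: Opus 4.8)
The plan is to exploit the equivalent characterization of a bornological space recorded earlier: since $E$ is bornological, every fuzzy bounded linear operator from $E$ into any fuzzy topological vector space is fuzzy continuous. Thus it suffices to show that the weakly continuous operator $T$ is \emph{fuzzy bounded}, i.e. that $T$ carries every bounded fuzzy set of $E$ to a bounded fuzzy set of $F$; once this is established, bornologicalness of $E$ forces $T$ to be fuzzy continuous and the proof is finished.

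First I would record an easy observation. Let $\mu$ be any bounded fuzzy set in $E$. Because the weak fuzzy topology $\sigma^{\Gamma}_f(E,E'_\Gamma)$ is coarser than the original topology $\tau$, every weak fuzzy neighborhood of zero is in particular a $\tau$-neighborhood of zero; since $\mu$ is absorbed by every $\tau$-neighborhood of zero, it is \emph{a fortiori} absorbed by every weak neighborhood of zero. Hence $\mu$ is weakly bounded in $E$.

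The heart of the argument is to transfer weak boundedness through $T$. Here I would use that $T$ is weakly fuzzy continuous, so Theorem \ref{thgm1} gives $T^{*}(F'_\Gamma)\subseteq E'_\Gamma$. The decisive intermediate step is the functional characterization: a fuzzy set $\eta$ is weakly bounded if and only if, for every continuous linear functional $x'$ in the relevant fuzzy dual, the image fuzzy set $x'(\eta)$ is bounded in $\mathbb{K}$. Granting this, for any $g\in F'_\Gamma$ we have $g\circ T=T^{*}(g)\in E'_\Gamma$, and by the identity of image fuzzy sets $g(T(\mu))=(g\circ T)(\mu)=T^{*}(g)(\mu)$; since $\mu$ is weakly bounded in $E$ and $T^{*}(g)\in E'_\Gamma$, the right-hand side is bounded in $\mathbb{K}$. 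As $g\in F'_\Gamma$ was arbitrary, the characterization applied now in $F$ shows that $T(\mu)$ is weakly bounded in $F$. The hypothesis on $F$ then upgrades this to ordinary boundedness, so $T(\mu)$ is bounded in $F$.

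Combining the three steps, $T$ sends every bounded fuzzy set to a bounded fuzzy set, hence $T$ is a fuzzy bounded linear operator, and bornologicalness of $E$ yields the fuzzy continuity of $T$. The main obstacle is the functional characterization of weak boundedness invoked above: one must unwind the fuzzy definition of ``$\nu$ absorbs $\eta$'' against the base of the weak topology consisting of finite infima $\bigwedge_{i=1}^{n} g_i^{-1}(\nu_i)$, and verify that boundedness of each scalar image $g_i(\eta)$ in $\mathbb{K}$ is exactly equivalent to each such basic neighborhood absorbing $\eta$. This requires handling the fuzzy scaling $(t\eta)(x)=\eta(x/t)$ and its interaction with preimages and finite infima, which is the only genuinely delicate computation; the remaining passages are formal.
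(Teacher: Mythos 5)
Your proof follows the paper's argument exactly: reduce via bornologicalness to showing that $T$ is fuzzy bounded, observe that a bounded fuzzy set in $E$ is weakly bounded, push it through the weakly continuous $T$ to obtain a weakly bounded fuzzy set in $F$, and invoke the hypothesis on $F$ to upgrade weak boundedness to boundedness. The only divergence is in the middle step, where the paper simply asserts that the weakly continuous $T$ carries weakly bounded sets to weakly bounded sets, while you route this through the adjoint inclusion $T^{*}(F'_\Gamma)\subseteq E'_\Gamma$ and a scalar characterization of weak boundedness (which you flag but do not prove); since the paper leaves its corresponding assertion equally unverified, your version is no less complete than the published one.
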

\begin{proof}
Since $E$ is bornological, it is enough show that $T$ is bounded. Let $\psi$ be a bounded fuzzy set in $E$. Then $\psi$ is weakly bounded. Since $T$ is weakly fuzzy continuous, $T(\psi)$ is a weakly fuzzy bounded set in $F$. Now that assumption of theorem shows that $T(\psi)$ is a bounded fuzzy set in $F$. Then $T$ is bounded.
\end{proof}
\begin{corollary}
Every linear functional on a bornological fuzzy linear space is continuous if and only if it is fuzzy continuous, Since the weak fuzzy topology on $\mathbb{K}$ is identical with the original topology.
\end{corollary}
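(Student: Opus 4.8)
The plan is to obtain this corollary as a specialization of the preceding theorem to the target space $F=\mathbb{K}$, together with Theorem \ref{km123}. First I would dispose of the easy implication: if a linear functional $f\colon E\to\mathbb{K}$ is fuzzy continuous, then Theorem \ref{km123} shows at once that $f$ is weakly fuzzy continuous, since every fuzzy continuous linear operator between Hausdorff locally convex fuzzy topological vector spaces is weakly continuous.

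For the converse I would take $F=\mathbb{K}$ equipped with its original (usual) fuzzy topology $\omega(\tau)$. The crucial point, which makes the hypothesis of the preceding theorem automatic in this case, is that on the one-dimensional space $\mathbb{K}$ the weak fuzzy topology coincides with the original fuzzy topology. This is because the single coordinate functional given by the identity $\mathrm{id}\colon\mathbb{K}\to\mathbb{K}$ already belongs to $\mathbb{K}'$, and its preimages $\mathrm{id}^{-1}(\mu)=\mu$ run over all of $\omega(\tau)$; hence the subbase defining the weak fuzzy topology recovers the whole original topology (the finite-dimensional results around Proposition \ref{145} express the same phenomenon). Since the two topologies agree, their neighborhood systems of zero agree, and therefore a fuzzy set in $\mathbb{K}$ is weakly bounded exactly when it is bounded.

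With this identification in hand, the standing assumption of the preceding theorem, namely that every weakly bounded fuzzy set in $F$ be bounded, is trivially satisfied by $F=\mathbb{K}$. I would then invoke that theorem directly: since $E$ is bornological, every weakly fuzzy continuous linear operator $T\colon E\to\mathbb{K}$, that is, every weakly continuous linear functional on $E$, is fuzzy continuous. Combining the two implications yields the asserted equivalence.

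The main obstacle I anticipate is not the bookkeeping of the two invoked theorems, which is routine, but rather justifying carefully that the weak fuzzy topology on $\mathbb{K}$ is literally the original one, and consequently that the weakly bounded and bounded fuzzy sets in $\mathbb{K}$ coincide. The essential ingredient there is the one-dimensionality of $\mathbb{K}$, which forces the identity functional to generate the full topology and thereby collapses the weak absorption condition onto the ordinary one.
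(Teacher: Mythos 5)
Your proposal is correct and follows essentially the same route as the paper, which justifies the corollary in one line by noting that the weak fuzzy topology on $\mathbb{K}$ coincides with its original topology (so that weakly bounded and bounded fuzzy sets in $\mathbb{K}$ agree, making the preceding theorem applicable with $F=\mathbb{K}$), while the reverse implication is Theorem \ref{km123}. Your elaboration via the identity functional generating the full topology on the one-dimensional space $\mathbb{K}$ is exactly the intended argument, just spelled out in more detail than the paper bothers to give.
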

\begin{corollary}
Every linear functional on a seminormed fuzzy linear space is continuous if and only if it is fuzzy continuous, Since every seminormed fuzzy linear space is bornological.
\end{corollary}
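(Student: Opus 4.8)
The plan is to reduce this statement to the preceding (bornological) corollary by establishing the single structural fact named in the statement itself, namely that every seminormed fuzzy linear space is bornological. Once that is in hand, the asserted equivalence for a linear functional $f$ on such a space $E$ follows verbatim from the bornological corollary, so the entire burden of the proof is the implication ``seminormed $\Rightarrow$ bornological.''

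To set up, I would recall that a seminormed fuzzy linear space carries the locally convex fuzzy topology generated by its fuzzy seminorm $\rho$ (absolutely convex and absorbing), with the base of neighborhoods of zero $B_\rho=\{\theta\wedge (t\rho)\mid t>0,\ 0<\theta\le 1\}$ exactly as in the Katsaras construction; note that being absorbing forces $\rho(0)=1$. First I would check that $\rho$ is itself a bounded fuzzy set, i.e. that it is absorbed by every neighborhood $\mu$ of zero. Given $\mu$ and any $\theta<\mu(0)$, the local-base property produces $0<\theta_0\le 1$ and $s>0$ with $\theta_0\wedge (s\rho)\le\mu$ and $\theta_0=(\theta_0\wedge(s\rho))(0)>\theta$; since $\theta<\theta_0$ we then get $\theta\wedge(s\rho)\le\theta_0\wedge(s\rho)\le\mu$, which is precisely the condition that $\mu$ absorbs $\rho$. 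Hence $\rho$ is bounded.

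The main step is then to show that any absolutely convex fuzzy set $\nu$ that absorbs all bounded fuzzy sets is a neighborhood of zero. Because $\rho$ is bounded, $\nu$ absorbs $\rho$: for every $\theta<\nu(0)$ there is $t_\theta>0$ with $\theta\wedge(t_\theta\rho)\le\nu$, and each such $\theta\wedge(t_\theta\rho)$ is a basic, hence open, neighborhood of zero in $B_\rho$. I would then form $\eta=\sup_{\theta<\nu(0)}\big(\theta\wedge(t_\theta\rho)\big)$. By axiom (iii) of a fuzzy topology $\eta$ is open, clearly $\eta\le\nu$, and evaluating at zero gives $\eta(0)=\sup_{\theta<\nu(0)}\big(\theta\wedge\rho(0)\big)=\sup_{\theta<\nu(0)}\theta=\nu(0)>0$. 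Thus $\eta$ witnesses that $\nu$ is a neighborhood of zero, and $E$ is bornological.

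The step I expect to be delicate is the passage at zero: the absorption condition only supplies basic neighborhoods with value $\theta$ strictly below $\nu(0)$, so no single basic neighborhood inside $\nu$ reaches the height $\nu(0)$ required by the definition of neighborhood; taking the supremum over all admissible $\theta$ is what closes this gap, and one must handle the strict versus non-strict inequalities in the definition of ``absorbs'' consistently. With ``seminormed $\Rightarrow$ bornological'' secured, the stated equivalence follows at once from the bornological corollary, finishing the proof.
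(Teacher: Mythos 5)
Your reduction is exactly the paper's: the corollary is obtained by feeding the fact ``seminormed $\Rightarrow$ bornological'' into the preceding bornological corollary, and the paper offers no argument beyond that one-line justification (the fact itself is taken from Katsaras \cite{k8}). What you add is an actual proof of that fact, and it is essentially sound: showing that $\rho$ is absorbed by every neighborhood of zero via the local base $B_\rho$, and then using absorption of $\rho$ by an absolutely convex $\nu$ to build an open witness below $\nu$ of height $\nu(0)$, is a correct strategy, and your handling of the strict inequality $\theta<\nu(0)$ by taking the supremum over all admissible $\theta$ is the right move. One small repair is needed: the sets $\theta\wedge(t\rho)$ in $B_\rho$ form a base of \emph{neighborhoods} of zero, which in this paper's conventions need not themselves be open fuzzy sets, so you cannot invoke axiom (iii) directly on them. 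Instead, pick for each $\theta<\nu(0)$ an open $\gamma_\theta\leq\theta\wedge(t_\theta\rho)$ with $\gamma_\theta(0)=\theta$ (such $\gamma_\theta$ exists by the definition of neighborhood) and take $\eta=\sup_\theta\gamma_\theta$; the rest of your computation goes through unchanged. With that patch your argument is complete and in fact more self-contained than the paper's.
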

\begin{example}
Let $E$ be vector space and $\mathcal{F}_{0}$ be the collection of all absolutely convex and absorbent fuzzy sets in $E$. Then $\mathcal{F}_{0}$ satisfies the conditions of Theorem 4.2 from \cite{k7}. Then by Theorem 4.2 from \cite{k7}, there exists a fuzzy linear topology $\tau_\beta$ on E such that $\mathcal{F}_{0}$
coincides with the family of all fuzzy neighborhoods of zero. The fuzzy topological space $ (E,\tau_\beta) $ is fuzzy Hausdorff. Indeed, if $ a,b \in E $ and $ a\neq b, $ then the fuzzy points $ a_1 $ and $ b_1 $ are fuzzy neighborhoods of $ a $ and $ b $ respectively, and we have $ a_1 \wedge b_1=0. $ Then $ (E,\tau_\beta) $ is fuzzy Hausdorff. Also for $ (E,\tau_\beta) $ we have $E'_\Gamma=E^*$. In fact, for $f\in E^*$ and each $ \mu \in \Gamma $ the fuzzy set $ f^{-1}(\mu) $ is absolutely convex and absorbing and then $ f^{-1}(\mu) \in \mathcal{F}. $ Therefore $ f \in E'_{\Gamma}. $ In this example, we have $\sigma^{\Gamma}_f(E,E'_\Gamma)=\sigma^{\Gamma}_f(E,E^*)$.
\end{example}
The weak fuzzy bounded subsets of a fuzzy topological vector space have an important role in the constructing of fuzzy topologies on the dual space.
\begin{theorem}\label{thm48}
Let $(E,E')$ be a dual pair. Then the fuzzy set $\mu$ in $E$ is weakly bounded if and only if $$\varphi_\mu(x')=\sup_{y\in \mathbb{K}}x'(\mu)(y),$$ is a fuzzy seminorm on $E'$.
\end{theorem}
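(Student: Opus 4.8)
The plan is to treat this as the fuzzy counterpart of the classical duality between weakly bounded sets and the seminorms of uniform convergence, proving the two implications by first reducing weak boundedness of $\mu$ to an explicit, testable condition on the images $x'(\mu)$. The main tool is the concrete base of weak neighbourhoods of zero recorded just before Lemma \ref{l478}: a local base for $\sigma^{\Gamma}_f(E,E')$ is given by the sets $\bigwedge_{i=1}^n f_i^{-1}(\theta_i\wedge (t_i\rho))$, where $\rho$ is the Katsaras norm $\rho(x)=1$ for $|x|<1$ and $0$ otherwise on $\mathbb{K}$, $f_i\in E'$, $t_i>0$ and $0<\theta_i\le 1$. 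By the Remark that a local base may be used in place of a base, it suffices to test weak boundedness of $\mu$ against these neighbourhoods alone.

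First I would unwind what absorption by one such neighbourhood means. A direct computation gives $f^{-1}(\theta\wedge (t\rho))(x)=\theta$ when $|f(x)|<t$ and $0$ otherwise, so the neighbourhood is the level-$\theta$ indicator of the slab $\{x:|f(x)|<t\}$. Writing out ``$V$ absorbs $\mu$'' for such a $V$, the only nontrivial constraint comes from the points where $V$ vanishes, and it forces a dilation parameter $s>0$ with $\mu(z)>0\Rightarrow|f(z)|<t/s$. Hence $V=\bigwedge_{i=1}^n f_i^{-1}(\theta_i\wedge(t_i\rho))$ absorbs $\mu$ precisely when a single $s$ can be chosen so that $\sup_{z\in\mathrm{supp}(\mu)}|f_i(z)|<t_i/s$ for all $i$, which is possible iff $\sup_{z\in\mathrm{supp}(\mu)}|f_i(z)|<\infty$ for each $i$. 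This identifies weak boundedness of $\mu$ with the pointwise condition that the image $x'(\mu)$ be a bounded fuzzy set in $\mathbb{K}$ for every $x'\in E'$, the relevant data of that image being packaged into $\varphi_\mu(x')=\sup_{y\in\mathbb{K}}x'(\mu)(y)$.

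With this reduction the equivalence splits into two steps. For the forward direction, assuming $\mu$ weakly bounded, every image $x'(\mu)$ is bounded, so $\varphi_\mu$ is well defined on $E'$, and I would then verify the two defining properties of a fuzzy seminorm (the definition of fuzzy seminorm): absorbency, $\sup_{t>0}t\varphi_\mu=1$, and absolute convexity. Balancedness and convexity should be obtained by pushing the linearity of the functionals through the image construction, using $(x_1'+x_2')(z)=x_1'(z)+x_2'(z)$ and $(tx')(z)=t\,x'(z)$ together with the formulas for $\mu_1+\mu_2$ and $t\mu$, so that the behaviour of $\varphi_\mu$ under addition and scaling of its argument is read off from the corresponding operations on the images. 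For the converse, if $\varphi_\mu$ is a fuzzy seminorm then its absorbency forces each $x'(\mu)$ to satisfy the boundedness extracted in the reduction, and the reduction then shows that $\mu$ is absorbed by every basic weak neighbourhood of zero, i.e. $\mu$ is weakly bounded.

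The step I expect to be the main obstacle is the absolute convexity of $\varphi_\mu$, specifically the subadditivity built into convexity: controlling $\varphi_\mu$ at a sum $x_1'+x_2'$ by the values at $x_1'$ and $x_2'$. The image of $\mu$ under the sum functional is not simply related to the two separate images, so one cannot argue coordinatewise; instead the estimate must go back through the definition of $+$ on fuzzy sets in $E'$ and the relation $(x_1'+x_2')(z)=x_1'(z)+x_2'(z)$, producing the triangle-type inequality by hand. A secondary technical point is the uniform choice of the single dilation parameter $s$ handling all $n$ functionals simultaneously in a finite-infimum neighbourhood; this is routine once each functional is bounded on $\mathrm{supp}(\mu)$, but it is exactly where the finite-intersection structure of the base $B_\rho$ is used.
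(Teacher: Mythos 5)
Your reduction of weak boundedness to the condition that every image $x'(\mu)$ be a bounded fuzzy set in $\mathbb{K}$ (equivalently $\sup_{z\in \mathrm{supp}(\mu)}|x'(z)|<\infty$ for each $x'\in E'$) is sound and is exactly the paper's opening step. The gap is that you never evaluate $\varphi_\mu$ itself, and doing so changes the picture entirely: since $x'(\mu)(y)=\sup_{x\in x'^{-1}(y)}\mu(x)$ and every $x\in E$ lies in $x'^{-1}(x'(x))$, one gets
$$\varphi_\mu(x')=\sup_{y\in\mathbb{K}}\;\sup_{x\in x'^{-1}(y)}\mu(x)=\sup_{x\in E}\mu(x)$$
for every $x'$ (including $x'=0$, where the supremum is attained at $y=0$). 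So $\varphi_\mu$ is a constant fuzzy set on $E'$, recording only the height of $\mu$ and nothing about the sets $x'(\mathrm{supp}(\mu))$. This has two consequences for your plan. First, the step you single out as the main obstacle --- the triangle-type estimate for $\varphi_\mu$ at $x_1'+x_2'$ --- is not where the difficulty lies; for a constant function absolute convexity is automatic (the paper instead disposes of convexity by invoking convexity of $\mu$ together with linearity of $x'$, a hypothesis not actually present in the statement). Second, and this is the genuine gap, your converse cannot work as described: absorbency of $\varphi_\mu$ amounts to $\sup_{x\in E}\mu(x)=1$ and carries no information whatsoever about the boundedness of the images $x'(\mu)$, so it cannot ``force'' the condition you extracted in your reduction. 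The implication from ``$\varphi_\mu$ is a fuzzy seminorm'' back to ``$\mu$ is weakly bounded'' is precisely the direction the paper waves away with ``the converse is clear,'' and your proposal does not close it either.

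A smaller point in the forward direction: for absorbency you need $\sup_{x\in E}\mu(x)=1$ (the paper asserts $\varphi_\mu(0)=1$ outright), which is an additional normality assumption on $\mu$ not stated in the theorem; your plan inherits the same unacknowledged hypothesis. In short, the forward half of your proposal follows the paper's route but leaves its key verification as an announced obstacle rather than a proof, and the converse rests on a claim that is false for the $\varphi_\mu$ actually defined in the statement.
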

\begin{proof}
Let $\mu$ be weakly  fuzzy bounded. Then for each $x'\in E'$, $x'(\mu)$ is fuzzy bounded set in $\mathbb{K}$. This shows that $\varphi_\mu$ is well defined. Since $\mu$ is convex and $x'$ is linear then $\varphi_\mu$ is convex.
We have $$\varphi_\mu(0)=\sup_{y\in \mathbb{K}}0(\mu)(y)=\sup_{x\in E}\mu(x)=1.$$
 This shows that $\varphi_\mu$ is absorbing. For $t\in\mathbb{K}$ with $|t|\leq 1$, we have
\begin{align}
\varphi_\mu(tx')&=
\sup_{y\in \mathbb{K}}(tx')(\mu)(y)\nonumber\\&=\sup_{y\in \mathbb{K}}\sup_{x\in (tx')^{-1}(y)}\mu(x)\nonumber\\&=\sup_{y\in \mathbb{K}}
\sup_{x\in x'^{-1}(\frac{y}{t})}\mu(x)\nonumber\\&=\sup_{y\in \mathbb{K}}\sup_{x\in x'^{-1}(y)}(\mu)(x)
\nonumber\\&=\varphi_\mu(x').\nonumber
\end{align}
This shows that $\varphi_\mu$ is balanced. The converse is clear.
\end{proof}


\section{Conclusion}

It is well known that there is only one linear Hausdorff topology on $ \mathbb{K} $ (generally on finite dimensional spaces) but this is not true in fuzzy structure. This leads to some difference between weak topology and weak fuzzy topology. But, we proved that when $ \Gamma =\omega(\tau), $ we have $ \sigma^{\Gamma}_{f}(E,E')=\omega (\sigma(E,E')). $ Then, we can consider $ \sigma(E,E') $ as an special case of $ \sigma^{\Gamma}_{f}(E,E'). $ Also, the difference between weak topology and weak fuzzy topology leads to new results on the theory of topological vector spaces. The weak fuzzy topology is very useful in constructing topologies on dual spaces. In the future, we concentrate for proving the Mackey-Arens and Banach-Alaoglu theorems in the fuzzy topological vector spaces using from weak fuzzy topology.

%

\date{\scriptsize $^{a}$
E-mail:bdaraby@maragheh.ac.ir,}\\
\date{\scriptsize $^{b}$
E-mail:nasibeh$_{-}$khosravi@yahoo.com,}\\
\date{\scriptsize $^{c}$
E-mail:rahimi@maragheh.ac.ir\\

}

\end{document}